\newtheorem{defn}{Definition}
\newtheorem{thm}{Theorem}
\newtheorem{lemma}{Lemma}
\newtheorem{col}{Corollary}
\newtheorem{pre}{Proposition}
\newtheorem{rem}{Remark}
\newtheorem{assumption}{Assumption}
\newcommand{\tr}[1]{\text{tr}\!\left(#1\right)}
\newcommand{\EX}{\mathbb{E} } 
\newcommand{\nm}[1]{{\color{black}#1}}
\newcommand{\cmark}{\ding{51}}%
\newcommand{\xmark}{\ding{55}}%
\title{\LARGE \bf A First-order Method for Monotone Stochastic Variational Inequalities on Semidefinite Matrix Spaces}
\author{Nahidsadat Majlesinasab$^{1}$, Farzad Yousefian$^{2}$, and Mohammad Javad Feizollahi$^{3}$
\thanks{$^{1}$Nahidsadat Majlesinasab is with the Department of Industrial Engineering and Management, Oklahoma State University,
         Stillwater, OK 74075, USA.
        {\tt\small nahid.majlesinasab@okstate.edu}}%
\thanks{$^{2}$ Farzad Yousefian is with Faculty of Industrial Engineering and Management, Oklahoma State University,
        Stillwater, OK 74078, USA.
        {\tt\small farzad.yousefian@okstate.edu}}%
\thanks{$^{3}$Mohammad Javad Feizollahi is with with Faculty of Robinson College of Business, Georgia State University, Atlanta, GA 30302, USA.
        {\tt\small mfeizollahi@gsu.edu}}%
}
\begin{document}

\maketitle
\thispagestyle{empty}
\pagestyle{empty}

\begin{abstract}
Motivated by multi-user optimization problems and non-cooperative Nash games in stochastic regimes, we consider stochastic variational inequality (SVI) problems on matrix spaces where the variables are positive semidefinite matrices and the mapping is merely monotone. Much of the interest in the theory of variational inequality (VI) has focused on addressing VIs on vector spaces.\nm{ Yet, most existing methods either rely on strong assumptions, or require a two-loop framework where at each iteration, a projection problem, i.e., a semidefinite optimization problem needs to be solved.} Motivated by this gap, we develop a stochastic mirror descent method where we choose the distance generating function to be defined as the quantum entropy. This method is a \nm{ single-loop} first-order method in the sense that it only requires a gradient-type of update at each iteration. The novelty of this work lies in the convergence analysis that is carried out through employing an auxiliary sequence of stochastic matrices. Our contribution is three-fold: (i) under this setting and employing averaging techniques, we show that the iterate generated by the algorithm converges to a weak solution of the SVI; (ii) moreover, we derive a convergence rate in terms of the expected value of a suitably defined gap function; (iii) we implement the developed method for solving a multiple-input multiple-output multi-cell cellular wireless network composed of seven hexagonal cells and present the numerical experiments supporting the convergence of the proposed method. 
\end{abstract}

\section{Introduction} \label{sec:int}
 Variational inequality problems first introduced in the 1960s have a wide range of applications arising in engineering, finance, and economics (cf. \cite{facchinei2007finite}) and are strongly tied to the game theory.
VI theory provides a tool to formulate different equilibrium problems and analyze the problems in terms of existence and uniqueness of solutions, stability and sensitivity analysis. In mathematical programming, VIs encompass problems such as systems of nonlinear equations, optimization problems, and complementarity problems to name a few 
\cite{scutari2010convex}. In this paper, we consider stochastic variational inequality problems where the variable $ X$ is a positive semidefinite matrix. 
Given a set ${\mathcal X}=\{  X \in \mathbb{S}^+_{n}, \tr{X}=1\}$, and a mapping $F:\mathbf{\mathcal X} \to \mathbb{R}^{n\times n}$, a VI problem denoted by VI$(\mathbf{\mathcal X}, F)$ seeks a positive semidefinite matrix $  X^* \in \mathbf{\mathcal X}$ such that 
\begin{align}
\label{eq:VI2}
\tr{F(  X^*)(  X-  X^*)} \geq 0, \quad \text{for all} ~  X \in    {\mathcal X}.
\end{align}
In particular, we study VI($\mathbf{\mathcal X}, F$) where $F(  X)=\EX[\Phi(  X,\xi(w))]$, i.e., the mapping $F$ is the expected value of a stochastic mapping $\Phi:\mathbf{\mathcal X}\times \mathbb{R}^{d} \to \mathbb{R}^{n\times n}$ where the vector $\xi:\Omega \to \mathbb{R}^d$ is a random vector associated with a probability space represented by $(\Omega, \mathcal{F},\mathbb{P})$. Here, $\Omega$ denotes the sample space, $\mathcal{F}$ denotes a $\sigma$-algebra on $\Omega$, and $\mathbb{P}$ is the associated \nm{probability measure.} Therefore, $X^* \in \mathbf{\mathcal X}$ solves VI($\mathbf{\mathcal X}, F$) if 
\begin{align}
\label{eq:VI}
	\tr{\EX[\Phi(  X^*,\xi(w))](  X-  X^*)} \geq 0,~\text{for all}~X \in {\mathcal X}.
\end{align}
Throughout, we assume that $\EX[\Phi(  X^*,\xi(w))]$ is well-defined (i.e., the expectation is finite).
\nm{\subsection{Motivating Example}}
\label{sec:motiv}
A non-cooperative game involves a number of decision makers called players who have conflicting interests and each tries to minimize/maximize his own payoff/utility function. Assume there are $N$ players each controlling a positive semidefinite matrix variable $  X_i$ which belongs to the set of all possible actions of the player $i$ denoted by $\mathcal X_i$. Let us define $  X_{-i}:\triangleq(  X_1,...,  X_{i-1},  X_{i+1},...,  X_N)$ as the feasible actions of other players. Let the payoff function of player $i$ be quantified by $f_i(X_i,X_{-i})$. Then, each player $i$ needs to solve the following semidefinite optimization problem 
\begin{align}
\label{eqn:problem1} 
\underset{X_i \in \mathcal X_i} {\text{minimize}}\quad f_i(X_i,X_{-i}).
\end{align}
A solution $  X^*=\left(  X^*_1,\ldots,   X^*_N\right)$ to this game called a Nash equilibrium is a feasible strategy profile such that $f_i(  X_i^*,  X_{-i}^*)\leq f_i(  X_i,  X^*_{-i})$, 
for all \nm{$X_i \in \mathcal X_i=\{X_i| X_i \in \mathbb S^+_{n_i}, \tr{X_i}=1\}$}, $i=1,\ldots,N$. \nm{As we discuss in Lemma \ref{lemma:nash}, }the optimality conditions of the above Nash game can be formulated as a VI$(\mathbf{\mathcal X}, F)$ where $\mathcal X:\triangleq \{  X|  X=\text{diag} (  X_1,\cdots,  X_N), \:   X_i\in \mathcal X_i,~\text{for all}~i=1,\ldots,N\}$ and $F(X):\triangleq \text{diag}(\nabla_{  X_1} f_1(  X),\cdots,\nabla_{  X_N} f_N(  X))$.\\
Problem \eqref{eqn:problem1} has a wide range of applications in wireless communications and information theory. Here we discuss a communication network example.

\textit{Wireless Communication Networks:} A wireless network is founded on transmitters that generate radio waves and receivers that detect radio waves.
To enhance the performance of the wireless transmission system, multiple antennas can be used to transmit and receive the radio signals. This system is called multiple-input multiple-output (MIMO) which provides high spectral efficiency in single-user wireless links without interference \cite{foschini1998limits}. Other MIMO systems include MIMO broadcast channels and MIMO multiple access channels, where there are multiple users (players) that mutually interfere. In these systems players either share the same transmitter or the same receiver. 
Recently, there has been much interest in MIMO systems under uncertainty when the state channel information is subject to measurement errors, delays or other imperfections \cite{mertikopoulos2017distributed}.  
Here, we consider the throughput maximization problem in multi-user MIMO networks under feedback errors and uncertainty. In this problem, we have $N$ MIMO links where each link $i$ represents a pair of transmitter-receiver with $m_i$ antennas at the transmitter and $n_i$ antennas at the receiver. We assume each of these links is a player of the game. 
Let $\mathbf  x_i \in \mathbb C^{m_i}$ and $\mathbf y_i \in \mathbb C^{n_i}$ denote the signal transmitted from and received by the $i$th link, respectively. The signal model can be described by $\mathbf{y}_i={H}_{ii}\mathbf{x}_i+\sum\nolimits_{j \ne i}{H}_{ji}\mathbf{x}_j+\mathbf{\epsilon}_i$,
where ${H}_{ii} \in \mathbb C^{n_i \times m_i}$ is the direct-channel matrix of link $i$, ${H}_{ji} \in \mathbb C^{n_i \times m_j}$ is the cross-channel matrix between transmitter $j$ and receiver $i$, and $\mathbf {\epsilon}_i \in \mathbb C^{n_i}$ is a zero-mean circularly symmetric complex Gaussian noise vector with the covariance matrix $\mathbf I_{m_i}$ \cite{mertikopoulos2016learning}. 
 The action for each player is the transmit power, meaning that each transmitter $i
$ wants to transmit at its maximal power level to improve its performance. However, doing so increases the overall interference in the system, which in turn, adversely impacts the performance of all involved transmitters and presents a conflict. 
It should be noted that we treat the interference generated by other users as an additive noise. Therefore, $\sum_{j \ne i}{H}_{ji}\mathbf{x}_j$ 
represents the multi-user interference (MUI) received by $i$th player and generated by other players. 
Assuming the complex random vector $\mathbf{x}_i$ follows a Guassian distribution, transmitter $i$ controls its input signal covariance matrix $  X_i:\triangleq\EX[\mathbf{x}_i\mathbf{x}_i^\dag]$ subject to two constraints: first the signal covariance matrix is positive semidefinite and second each transmitter's maximum transmit power is bounded by a positive scalar $p$.  
Under these assumptions, each player's achievable transmission throughput for a given set of players' covariance matrices $X_1,\ldots,X_N$ is given by
\begin{align}
\label{eq:game}
	R_i(  X_i,  X_{-i})&=\log \det\left(\mathbf I_{n_i}+\sum\nolimits_{j=1}^N H_{ji}  X_j  H_{ji}^\dagger\right)\nonumber\\&-\log \det(W_{-i}),
\end{align}
 where $W_{-i}=\mathbf I_{n_i}+\sum_{j \ne i}H_{ji}X_j H_{ji}^\dagger$ is the MUI-plus-noise covariance matrix at receiver $i$ \cite{telatar1999capacity}. The goal is to solve 
\begin{align}
\label{eq:Rgame} 
\underset{X_i \in \mathcal X_i} {\text{maximize}}\quad R_i(  X_i,  X_{-i}),
\end{align}
for all $i$, where $\mathcal X_i=\{X_i:X_i\succeq 0$, $\tr{X_i}\leq p\}$.
\subsection{Existing methods}
Our primary interest in this paper lies in solving SVIs on semidefinite matrix spaces. Computing the solution to this class of problems is challenging mainly due to the presence of uncertainty and the semidefinite solution space. In what follows, we review some of the methods in addressing these challenges. More details are presented in Table \ref{tbl:comp}.
\begin{table*}[ht]
\caption{Comparison Of Schemes}
\label{tbl:comp}
	\centering
\begin{tabular}{|c|c|c|c|c|c|c|c|}
	\hline
	 Reference & Problem & Characteristic & Assumptions & Space & Scheme & Single-loop & Rate \\ 
	\hline
	 Jiang and Xu \cite{jiang2008stochastic} & VI & Stochastic & SM,S & Vector & SA & \xmark & $-$ \\
	\hline 
	Juditsky et al. \cite{juditsky2011solving} & VI & Stochastic & MM,S/NS & Vector & Extragradient SMP  & \xmark & ${\cal O} \left({1}/{t}\right)$ \\
	\hline 
	Lan et al. \cite{lan2011primal} & Opt & Deterministic & C,S/NS & Matrix & \begin{minipage}{2.3cm} Primal-dual \\ Nesterov's methods \end{minipage}& \xmark & ${\cal O} \left({1}/{t}\right)$  \\
	\hline 
		Mertikopoulos et al. \cite{mertikopoulos2012matrix} & Opt & Stochastic &  C,S  & Matrix & Exponential Learning & \cmark & $e^{-\alpha t}\nm{(\alpha>0)}$ \\
	\hline 
	Hsieh et al. \cite{hsieh2013big} & Opt & Deterministic & NS,C & Matrix & BCD & \xmark & superlinear \\
	\hline 
	Koshal et al. \cite{koshal2013regularized} & VI & Stochastic & MM,S & Vector & Regularized Iterative SA & \xmark & $-$ \\
	\hline 
		Yousefian et al. \cite{yousefian2016stochastic} & VI & Stochastic & PM,S & Vector & Averaging B-SMP & \xmark & ${\cal O} \left({1}/{t}\right)$ \\
	\hline 
	Yousefian et al. \cite{yousefian2017smoothing}  & VI & Stochastic & MM,NS & Vector & Regularized Smooth SA & \xmark & ${\cal O} \left({1}/{\sqrt{t}}\right)$ \\
	\hline 
	Mertikopoulos  et al. \cite{mertikopoulos2017distributed} & VI & Stochastic & SM,S & Matrix & Exponential Learning & \cmark &${\cal O} \left({1}/{Bt}\right)$   \\
	\hline 	
Necoara et al. \cite{necoara2017complexity} & Opt & Deterministic & C,S/NS & Vector & Inexact Lagrangian & \xmark & ${\cal O} \left({1}/{t^{1.5}}\right)$  \\
	\hline 
	\textbf{Our work} & VI & Stochastic & MM, NS & Matrix & AM-SMD & \cmark & ${\cal O} \left({1}/{\sqrt{t}}\right)$ \\
	\hline 
\end{tabular}
\begin{minipage}{15cm}%
\vspace{0.08in}
 SM: \textit{strongly monotone mapping},\quad MM: \textit{merely monotone mapping}, \quad PM: \textit{psedue-monotone mapping},\quad S: \textit{smooth function} \\ NS: \textit{nonsmooth function},\quad C: \textit{convex},\quad Opt: \textit{optimzation problem}, \quad \nm{B: strong stability parameter }
  \end{minipage}%
\end{table*}

\textbf{Stochastic approximation (SA) schemes:} \nm{The} SA method was first developed \nm{in} \cite{robbins1951stochastic} and has been very successful in solving optimization and equilibrium problems with uncertainties. \nm{Jiang and Xu \cite{jiang2008stochastic}} appear amongst the first who applied SA methods to \nm{address} SVIs. 
In recent years, prox generalization of SA methods were developed for solving stochastic optimization problems \cite{nemirovski2009robust,majlesinasab2017optimal} and VIs. 
%
The monotonicity of the gradient mapping plays an important role in the convergence analysis of this class of solution methods. The extragradient method which relies on weaker assumptions, i.e., pseudo-monotone mappings to address VIs was developed \nm{in} \cite{korpelevich1977extragradient}, but this method requires two projections per iteration. \nm{Dang and Lan \cite{dang2015convergence} }developed a non-Euclidean extragradient method to address generalized monotone VIs. 
The prox generalization of the extragradient schemes to stochastic settings were developed \nm{in} \cite{juditsky2011solving}. 
Averaging techniques first introduced \nm{in} \cite{Polyak92} proved successful in increasing the robustness of the SA method. In vector spaces equipped with non-Euclidean norms, \nm{Nemirovski et al. \cite{nemirovski2009robust}} developed the stochastic mirror descent (SMD) method for solving nonsmooth stochastic optimization problems. 
\nm{While SA schemes and their prox generalization can be applied directly to solve problems with semidefinite constraints, they result in a two-loop framework and require projection onto a semidefinite cone by solving an optimization problem at each iteration which increases the computational complexity.} 

\textbf{Exponential learning methods:} Optimizing over sets of positive semidefinite matrices is more challenging than vector spaces because of the form of the problem constraints. 
In this line of research, \nm{an approach based on matrix exponential learning (MEL) is proposed in \cite{mertikopoulos2012matrix}} to solve the power allocation problem in MIMO multiple access channels. MEL is an optimization algorithm applied to positive definite nonlinear problems and has strong ties to mirror descent methods. MEL makes the use of quantum entropy as the distance generating function. Later, the convergence analysis of MEL \nm{is provided in \cite{mertikopoulos2016learning} and its robustness w.r.t. uncertainties is shown}. \nm{In} \cite{yu2016dynamic}, single-user MIMO throughput maximization problem \nm{is addressed which is an optimization problem not a Nash game.
In the multiple channel case, an optimization problem can be derived that makes the analysis much easier. However, there are some practical problems that cannot be treated as an optimization problem such as multi-user MIMO maximization discussed earlier. In this regard, \cite{mertikopoulos2017distributed} proposed an algorithm relying on MEL for solving N-player games under feedback errors and presented its convergence to a stable Nash equilibrium under a strong stability assumption. However, in most applications including the game \eqref{eq:game}
 the mapping does not satisfy this assumption.}

\textbf{Semidefinite and cone programming:}
Sparse inverse covariance estimation (SICE) is a procedure which improves the stability of covariance estimation by setting a certain number of coefficients in the inverse covariance to zero.
\nm{Lu} \cite{lu2010adaptive} developed two first-order methods including the adaptive spectral projected gradient and the adaptive Nesterov's smooth methods to solve the large scale covariance estimation problem. 
 In this line of research, a block coordinate descent (BCD) method with a superlinear convergence rate is proposed \nm{in \cite{hsieh2013big}}. 
In conic programming with complicated constraints, many first-order methods are combined with duality or penalty strategies \cite{lan2011primal,necoara2017complexity}. These methods are projection based and do not scale with \nm{the} problem size. 

Much of \nm{the} interest in \nm{the VI regime} has focused on addressing VIs on vector spaces. 
Moreover, in the literature of semidefinite programming, most of the methods address deterministic semidefinite optimization. Yet, there are many stochastic systems such as wireless communication systems that can be modeled as positive semidefnite Nash games. 
In this paper, we consider SVIs on matrix spaces where the mapping is merely monotone. Our main contributions are as follows:

\textit{(i) Developing an averaging matrix stochastic mirror descent (AM-SMD) method:} We develop an SMD method where we choose the distance generating function to be defined as the quantum entropy following \cite{tsuda2005matrix}. It is a first-order method in the sense that only a gradient-type of update at each iteration is needed. \nm{The algorithm does not need a projection step at each iteration since it provides a closed-form solution for the projected point}. To improve the robustness of the method for solving SVI, we use the averaging technique.
Our work is an improvement to MEL method \cite{mertikopoulos2017distributed} and is motivated by the need to weaken the strong stability (monotonicity) requirement on the mapping.  
\nm{The main novelty of our work lies in the convergence analysis in absence of strong monotonicity where we introduce an auxiliary sequence and we are able to establish convergence to a weak solution of the SVI.} Then, we derive a convergence rate of $\mathcal O(1/\sqrt{T})$ in terms of the expected value of a suitably defined gap function. \nm{To clarify the distinctions of our contributions, we prepared Table \ref{tbl:comp} where we summarize the differences between the existing methods and our work.} 

\textit{(ii) Implementation results:} We present the performance of the proposed AM-SMD method applied on the throughput maximization problem in wireless multi-user MIMO networks. Our results indicate the robustness of the AM-SMD scheme with respect to problem parameters and uncertainty. Also, it is shown that the AM-SMD outperforms both non-averaging M-SMD and MEL \nm{\cite{mertikopoulos2017distributed}}.

The paper is organized as follows. In Section \ref{sec:algorithm}, we state the assumptions on the problem and outline our AM-SMD algorithm. Section \ref{sec:convergence} contains the convergence analysis and the rate derived for the AM-SMD method. We report some numerical results in Section \ref{sec:num} and conclude in Section \ref{sec:conclusion}.

\textbf{Notation.} Throughout, we let $\mathbb S_n$ denote the set of all $n \times n$ symmetric matrices and $\mathbb S_n^+$ the cone of all positive semidefinite matrices. We define $\mathscr X:=\{X\in \mathbb S_n^+:\tr{X} \leq 1\}$. The mapping $F:\mathcal X \to \mathbb R^{n \times n}$ is called monotone if for any $X,Y \in \mathcal X$, we have $\tr{(  X-  Y)(  F (  X)-  F (  Y))}\geq 0$. Let $[A]_{uv}$ denote the elements of matrix $A$ and $\mathbb C$ denote the set of complex numbers. The norm $\Vert A \Vert_2$ denotes the spectral norm of a matrix $A$ being the largest singular value of $A$. The trace norm of a matrix $A$ denoted by $\tr{A}$ is the sum of singular values of the matrix. Note that spectral and trace norms are dual to each other \cite{fazel2001rank}. 
We use $\text{SOL}({\mathcal X}, \mathit F)$ to denote the set of solutions to VI($\mathbf{\mathcal X},F$). 
\section{Algorithm outline}\label{sec:algorithm}
In this section, we present the AM-SMD scheme for solving \eqref{eq:VI}. 
Suppose $\omega: \text{dom}(\omega) \to \mathbb R$ is a strictly convex and differentiable function, where $\text{dom}(\omega) \subseteq \mathbb{R}^{n\times n}$, and let $X,Y\in \mathbb{R}^{n\times n}$. Then, Bregman divergence between $X$ and $Y$ is defined as 
\nm{
\begin{align*}
	D(  X,  Y):=\omega(  X)-\omega(  Y)-\tr{(  X-  Y)\nabla\omega(  Y)^T}.
\end{align*}
In what follows, our choice of $\omega$ is the quantum entropy \cite{vedral2002role}, 
\begin{equation}
\label{eq:entropy}
\omega(X) = \left\{
\begin{array}{rl}
\tr{X\log X-X}&~\text{if }\quad X\in \mathcal X,\\
+\infty \quad \quad \quad \quad \quad \quad & \quad \text{otherwise}.
\end{array} \right.
\end{equation}
The Bregman divergence corresponding to the quantum entropy is called von Neumann divergence and is given by
\begin{align}
	D(X,Y)=\tr{X\log X- X\log Y}
\end{align}
}
\cite{tsuda2005matrix}. In our analysis, we use the following property of $\omega$.
\begin{lemma}
\label{lm:strong} (\cite{yu2013strong})
The quantum entropy $\omega : \mathscr X\to \mathbb R$ is strongly convex with modulus 1 under the trace norm.
\end{lemma}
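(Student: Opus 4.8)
The plan is to reduce the claim to the Bregman-divergence characterization of strong convexity and then to establish that inequality directly. Recall that a differentiable $\omega$ is $1$-strongly convex with respect to the trace norm if and only if its Bregman divergence satisfies $D(X,Y)\ge \tfrac12\|X-Y\|_{\text{tr}}^2$ for all $X,Y$ in the domain; one direction is immediate from the definition of $D$, and the converse follows by integrating the gradient inequality along the segment $[Y,X]$. Since here the divergence is the von Neumann divergence $D(X,Y)=\tr{X\log X-X\log Y}$, it suffices to prove the matrix analogue of Pinsker's inequality, namely $D(X,Y)\ge \tfrac12\|X-Y\|_{\text{tr}}^2$ on $\mathscr X$. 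Equivalently, via the second-order characterization, I would show the pointwise Hessian bound $\nabla^2\omega(X)[H,H]\ge \|H\|_{\text{tr}}^2$ for every Hermitian direction $H$.

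First I would record the commuting case, which already carries the main idea. When $X$ and $Y$ share an eigenbasis with eigenvalues $\lambda_i$ and $\gamma_i$, the divergence collapses to the (generalized) relative entropy $\sum_i\lambda_i\log(\lambda_i/\gamma_i)$ and the trace norm to $\sum_i|\lambda_i-\gamma_i|$. The cleanest derivation is second-order: Taylor's theorem reduces the bound to $\sum_i h_i^2/\lambda_i\ge\big(\sum_i|h_i|\big)^2$ for any direction $h$, which is exactly Cauchy--Schwarz combined with $\sum_i\lambda_i=\tr{X}\le 1$. This is the scalar Pinsker inequality, and the constraint $\tr{X}\le 1$ is precisely what furnishes the modulus $1$.

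To remove commutativity I would invoke the Lindblad--Uhlmann monotonicity (data-processing) inequality for relative entropy. Let $A=X-Y$ and let $\{e_k\}$ be an orthonormal eigenbasis of $A$, with associated pinching channel $\mathcal P(\cdot)=\sum_k \langle e_k|\cdot|e_k\rangle\,|e_k\rangle\langle e_k|$. Writing $p_k=\langle e_k|X|e_k\rangle$ and $q_k=\langle e_k|Y|e_k\rangle$, monotonicity under the completely positive trace-preserving map $\mathcal P$ gives $D(X,Y)\ge \sum_k p_k\log(p_k/q_k)$, while $\|X-Y\|_{\text{tr}}=\sum_k|\langle e_k|A|e_k\rangle|=\sum_k|p_k-q_k|$ because $A$ is diagonal in $\{e_k\}$. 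Chaining these with the commuting-case bound of the previous paragraph yields $D(X,Y)\ge\tfrac12\|X-Y\|_{\text{tr}}^2$, which completes the argument.

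The main obstacle is precisely the passage from a Frobenius-type estimate to the trace norm. A direct computation gives $\nabla^2\omega(X)[H,H]=\sum_{i,j}\tfrac{\log\lambda_i-\log\lambda_j}{\lambda_i-\lambda_j}\,|H_{ij}|^2$ in the eigenbasis of $X$, and since each divided difference of $\log$ is at least $2/(\lambda_i+\lambda_j)\ge 1$ on $\mathscr X$, this quadratic form dominates $\|H\|_F^2$; but $\|H\|_F\le\|H\|_{\text{tr}}$, so an entrywise bound is too weak to reach the trace-norm modulus. What the trace norm genuinely requires is control of the \emph{spectrum} of the increment $H$, not merely its entries, and the coupling between the eigenbasis of $X$ and the eigenvalues of $H$ is exactly what the pinching in the eigenbasis of $X-Y$ resolves. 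For this reason I would route the proof through the relative-entropy reduction rather than through the raw Hessian estimate.
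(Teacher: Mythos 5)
The paper itself contains no proof of this lemma: it is quoted directly from the cited reference \cite{yu2013strong}, so your proposal has to stand on its own. Its overall route --- the Bregman-divergence characterization of strong convexity, reduction to a quantum Pinsker inequality, the commuting case via the Hessian bound $\sum_i h_i^2/\lambda_i\ge\bigl(\sum_i|h_i|\bigr)^2$ (Cauchy--Schwarz plus $\tr{X}\le 1$), and the passage to non-commuting matrices by pinching in the eigenbasis of $X-Y$ together with Lindblad--Uhlmann monotonicity --- is the standard proof of this fact, and the key structural observations are correct, in particular that the trace norm is unchanged by that particular pinching and that an entrywise Hessian estimate can only reach the Frobenius norm, so the spectral information must come from the pinching.

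There is, however, one concrete slip that you must repair, because a step fails as literally written. On $\mathscr X=\{X\succeq 0:\tr{X}\le 1\}$ the Bregman divergence of $\omega(X)=\tr{X\log X-X}$ is not $\tr{X\log X-X\log Y}$ but
\begin{align*}
D_\omega(X,Y)=\tr{X\log X-X\log Y}+\tr{Y-X},
\end{align*}
since $\nabla\omega(Y)=\log Y$; the two expressions agree only when $\tr{X}=\tr{Y}$ (e.g.\ on $\mathcal X$), whereas the lemma is stated on $\mathscr X$ where traces may differ. Consequently the inequality you claim ``it suffices to prove,'' namely $\tr{X\log X-X\log Y}\ge\tfrac12\Vert X-Y\Vert_{\text{tr}}^2$, is simply false on $\mathscr X$: already for $1\times 1$ matrices $X=0.5$, $Y=0.9$ the left side equals $0.5\log(5/9)<0$ while the right side is positive. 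The same omission propagates to your final chaining step, where the commuting-case bound is applied to the uncorrected quantity $\sum_k p_k\log(p_k/q_k)$; that bound fails whenever $\sum_k p_k\ne\sum_k q_k$. The fix is mechanical but must be carried through every step: work with the corrected (subnormalized) divergence throughout. Both pillars of your argument genuinely hold for it --- the commuting-case Hessian computation is insensitive to the linear term $\tr{Y-X}$ and therefore already proves the corrected classical inequality, and Lindblad--Uhlmann monotonicity extends to the corrected relative entropy of positive semidefinite operators because the linear terms are invariant under any trace-preserving map. With $\tr{Y-X}$ retained in the reduction, the pinching step, and the classical bound, your argument becomes a complete and correct proof of the lemma.
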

Since $ \mathcal X\subset\mathbf{\mathscr X}$, the quantum entropy $\omega:{\mathcal X}\to \mathbb R$ is also strongly convex with modulus 1 under the trace norm.

\nm{Next, we address the optimality conditions of a matrix constrained optimization problem as a VI which is an extension of Prop. 1.1.8 in \cite{bertsekas2009convex}.}
\begin{lemma} \label{lemma:optimality}
Let $\mathcal X \subseteq \mathbb R^{n \times n}$ be a nonempty closed convex set, and let $f:\mathbb R^{n \times n}\to \mathbb R$ be a differentiable convex function. Consider the optimization problem
\begin{align}
\label{eqn:problem2} 
\underset{\widetilde{X} \in \mathcal X} {\text{minimize}}\quad f( \widetilde{  X}).
\end{align}
A matrix $\widetilde{  X}^*$ is optimal to problem \eqref{eqn:problem2} iff $\widetilde{  X}^* \in \mathcal X$ and $\tr{\nabla^Tf(\widetilde{  X}^*)(  Z-\widetilde {  X}^*)} \geq 0$, for all $Z\in \mathcal X$.
\end{lemma}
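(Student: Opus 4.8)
The plan is to mirror the classical vector-space argument (Prop.~1.1.8 in \cite{bertsekas2009convex}) by exploiting the identification of $\mathbb R^{n\times n}$ with $\mathbb R^{n^2}$ equipped with the inner product $\langle A,B\rangle=\tr{A^TB}$. Under this identification, the gradient of $f$ at $\widetilde X$ acts on a direction $D$ through $\tr{\nabla^Tf(\widetilde X)\,D}$, so the matrix variational inequality in the statement is exactly the familiar vector variational inequality rewritten in trace notation. Both implications then reduce to standard first-order convex analysis, and the proof splits naturally into a necessity and a sufficiency direction.

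For the necessity direction ($\Rightarrow$), I would fix an arbitrary $Z\in\mathcal X$ and use convexity of $\mathcal X$ to conclude that the segment $\widetilde X^*+\alpha(Z-\widetilde X^*)$ lies in $\mathcal X$ for every $\alpha\in[0,1]$. Optimality of $\widetilde X^*$ then gives $f(\widetilde X^*+\alpha(Z-\widetilde X^*))\ge f(\widetilde X^*)$, so the difference quotient is nonnegative for all small $\alpha>0$. Letting $\alpha\downarrow 0$ and invoking differentiability of $f$ identifies the limit with the directional derivative $\tr{\nabla^Tf(\widetilde X^*)(Z-\widetilde X^*)}$, which is therefore nonnegative, as claimed.

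For the sufficiency direction ($\Leftarrow$), I would invoke the gradient inequality for a differentiable convex function, namely $f(Z)\ge f(\widetilde X^*)+\tr{\nabla^Tf(\widetilde X^*)(Z-\widetilde X^*)}$ for every $Z\in\mathcal X$. Combining this with the assumed inequality $\tr{\nabla^Tf(\widetilde X^*)(Z-\widetilde X^*)}\ge 0$ yields $f(Z)\ge f(\widetilde X^*)$ for all feasible $Z$, so $\widetilde X^*$ is optimal.

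The only delicate point, and the main obstacle, is justifying that the directional derivative of $f$ along $D=Z-\widetilde X^*$ equals $\tr{\nabla^Tf(\widetilde X^*)\,D}$; this is precisely the content of the entry-wise expansion $\tr{A^TB}=\sum_{u}\sum_{v}[A]_{uv}[B]_{uv}$, which lets one apply the ordinary chain rule after vectorizing the matrix argument. Once this correspondence is established, the limit computation in the necessity part and the gradient inequality in the sufficiency part both follow from the scalar theory, and nothing in the matrix setting requires more than this identification.
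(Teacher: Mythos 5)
Your proposal is correct and follows essentially the same route as the paper's proof: feasibility of the segment $\widetilde X^*+\alpha(Z-\widetilde X^*)$ via convexity of $\mathcal X$ plus first-order behavior of $f$ for necessity, and the gradient inequality $f(Z)\ge f(\widetilde X^*)+\tr{\nabla^Tf(\widetilde X^*)(Z-\widetilde X^*)}$ for sufficiency. The only cosmetic difference is that you take the difference-quotient limit directly, whereas the paper argues necessity by contradiction through a first-order Taylor expansion; the underlying idea is identical.
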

\begin{proof} 
($\Rightarrow$) Assume $\widetilde{  X}^*$ is optimal to problem \eqref{eqn:problem2}. Assume by contradiction, there exists some $\hat{  Z} \in \mathcal X$ such that $\tr{\nabla_{\widetilde{  X}}^Tf(\widetilde{  X}^*)(\hat{  Z}-\widetilde {  X}^*)} < 0$. Since $f$ is continuously differentiable, by the first-order Taylor expansion, for all sufficiently small $0<\alpha<1$, we have 
\begin{align*}
&f(\widetilde{  X}^*+\alpha (\hat{  Z}-\widetilde {  X}^*))= f({  X}^*) +\\
 &\tr{\nabla_{\widetilde{  X}}^Tf(\widetilde{  X}^*) (\hat{  Z}-\widetilde {  X}^*)}+ o(\alpha) < f({  X}^*),
\end{align*}
following the hypothesis $\tr{\nabla_{\widetilde{  X}}^Tf(\widetilde{  X}^*)(\hat{  Z}-\widetilde {  X}^*)} < 0$. Since $\mathcal X$ is convex and $  X^*,~\hat{  Z} \in \mathcal X$, we have $\widetilde{  X}^*+\alpha (\hat{  Z}-\widetilde {  X}^*) \in \mathcal X$ with smaller objective function value than the optimal matrix $\widetilde{  X}^*$. This is a contradiction. Therefore, we must have $\tr{\nabla_{\widetilde{  X}}^T f(\widetilde{  X}^*)(  Z-\widetilde {  X}^*)} \geq 0$ for all $  Z\in \mathcal X$. 
\\($\Leftarrow$) Now suppose that  $\widetilde{  X}^* \in \mathcal X$ and for all $   Z\in \mathcal X$, $\tr{\nabla_{\widetilde{  X}}^T f(\widetilde{  X}^*)(  Z-\widetilde {  X}^*)} \geq 0$. Since $f$ is convex 
, we have
\begin{align*}
	f(\widetilde {  X}^*) +\tr{\nabla_{\widetilde{  X}}^T f(\widetilde{  X}^*)(  Z-\widetilde {  X}^*)} \leq f(  Z),
\end{align*}
for all $Z\in \mathcal X$ which implies
\begin{align*}
 f(  Z)-f(\widetilde {  X}^*) \geq 	\tr{\nabla_{\widetilde{  X}}^T f(\widetilde{  X}^*)(  Z-\widetilde {  X}^*)} \geq 0,
\end{align*}
where the last inequality follows by the hypothesis. Since $\widetilde {  X}^* \in \mathcal X$, it follows that $\widetilde {  X}^*$ is optima. 
\end{proof}
\nm{The next Lemma shows a set of sufficient conditions under which a Nash equilibrium can be obtained by solving a VI.}
\begin{lemma} \label{lemma:nash} 
[Nash equilibrium]
Let $\mathcal X_i \in \mathbb S_{n_i}$ be a nonempty closed convex set and $f_i(  X_i,  X_{-i})$ be a differentiable convex function in $  X_i$ for all $i=1,\cdots,N$, where $  X_i \in \mathcal X_i$ and $  X_{-i} \in \prod_{j\ne i} {\mathcal X_j}$. Then, $  X^*\triangleq \text{diag}(  X_1^*,\cdots,  X_N^*)$ is a Nash equilibrium (NE) to game \eqref{eqn:problem1} if and only if $  X^*$ solves VI($\mathbf{\mathcal X}, F$), where
\begin{align}
\label{Fdefinition}
&F(X):\triangleq \text{diag}(\nabla_{  X_1} f_1(  X),\cdots,\nabla_{  X_N} f_N(  X)),	\\
\label{xdefinition}
&\mathcal X:\triangleq \{  X|  X=\text{diag} (  X_1,\cdots,  X_N), \:   X_i\in \mathcal X_i,~\text{for all} ~ i\}.
\end{align}
\end{lemma}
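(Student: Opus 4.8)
The plan is to reduce the stated equivalence to the single-player optimality characterization already established in Lemma \ref{lemma:optimality}, exploiting the block-diagonal structure of both the feasible set $\mathcal X$ in \eqref{xdefinition} and the mapping $F$ in \eqref{Fdefinition}. The essential observation I would use throughout is that for two block-diagonal matrices $A=\text{diag}(A_1,\dots,A_N)$ and $B=\text{diag}(B_1,\dots,B_N)$, the product $AB$ is again block-diagonal with blocks $A_iB_i$, so that $\tr{AB}=\sum_{i=1}^N\tr{A_iB_i}$. Applying this to $A=F(X^*)$ and $B=X-X^*$, both of which are block-diagonal whenever $X,X^*\in\mathcal X$, yields the decomposition $\tr{F(X^*)(X-X^*)}=\sum_{i=1}^N\tr{\nabla_{X_i}f_i(X^*)(X_i-X_i^*)}$, which is the bridge between the aggregate VI and the per-player conditions.

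For the forward direction ($\Rightarrow$), I would start from the definition of a Nash equilibrium: if $X^*$ is an NE, then for each fixed $i$ the block $X_i^*$ minimizes $f_i(\,\cdot\,,X_{-i}^*)$ over $\mathcal X_i$. Since $f_i(\,\cdot\,,X_{-i}^*)$ is convex and differentiable in its own argument by hypothesis, Lemma \ref{lemma:optimality} gives $\tr{\nabla_{X_i}f_i(X^*)(Z_i-X_i^*)}\geq 0$ for every $Z_i\in\mathcal X_i$ and every $i$. Summing these $N$ inequalities and invoking the block-diagonal trace identity above produces $\tr{F(X^*)(X-X^*)}\geq 0$ for every $X=\text{diag}(Z_1,\dots,Z_N)\in\mathcal X$; since every element of $\mathcal X$ has exactly this form, $X^*$ solves VI($\mathcal X,F$).

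For the converse ($\Leftarrow$), the key idea is to test the aggregate VI against \emph{single-block deviations}. Given that $X^*$ solves VI($\mathcal X,F$) and a fixed index $i$, I would choose the feasible point $X\in\mathcal X$ that agrees with $X^*$ in every block except the $i$-th, where it equals an arbitrary $Z_i\in\mathcal X_i$. This is admissible precisely because $\mathcal X$ is a Cartesian product of the $\mathcal X_i$ in block-diagonal form, so perturbing one block leaves the point feasible. For this choice $X-X^*$ has only its $i$-th block nonzero, and the trace decomposition collapses the VI inequality to $\tr{\nabla_{X_i}f_i(X^*)(Z_i-X_i^*)}\geq 0$ for all $Z_i\in\mathcal X_i$. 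By the sufficiency part of Lemma \ref{lemma:optimality}, this means $X_i^*$ minimizes $f_i(\,\cdot\,,X_{-i}^*)$ over $\mathcal X_i$, i.e., $f_i(X_i^*,X_{-i}^*)\leq f_i(X_i,X_{-i}^*)$ for all $X_i\in\mathcal X_i$. Running this argument for each $i=1,\dots,N$ establishes all the Nash equilibrium inequalities, completing the equivalence.

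I expect the main subtlety, rather than a genuine obstacle, to be the careful bookkeeping of the transpose and symmetry so that $\nabla_{X_i}f_i(X^*)$ enters the trace in the same form required by Lemma \ref{lemma:optimality}, together with the verification that restricting the VI to single-block perturbations is both feasible, using the product structure of $\mathcal X$, and loses no information. The convexity and differentiability of each $f_i$ in its own block is exactly what permits invoking Lemma \ref{lemma:optimality} in both directions; notably, no monotonicity of $F$ is needed for this equivalence.
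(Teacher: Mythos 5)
Your proposal is correct and follows essentially the same route as the paper: both directions reduce to Lemma \ref{lemma:optimality}, the forward direction by summing the per-player first-order conditions, and the converse by testing the VI against single-block deviations $\bar Z$ that agree with $X^*$ in all but the $i$-th block. The only cosmetic difference is that you invoke the block-diagonal identity $\tr{AB}=\sum_{i}\tr{A_iB_i}$ directly, whereas the paper carries out the same decomposition via entry-wise sums $\sum_{u}\sum_{v}[\cdot]_{uv}[\cdot]_{uv}$.
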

\begin{proof}
First, suppose $  X^*$ is an NE to game \eqref{eqn:problem1}. We want to prove that $  X^*$  solves VI($\mathbf{\mathcal X}, F$), i.e, $\tr{F(  X^*)^T(Z-X^*)} \geq 0$, for all $Z\in \mathcal X$.
By optimality conditions of optimization problem \eqref{eqn:problem1} and from Lemma \ref{lemma:optimality}, we know $X^*$ is an NE if and only if $\tr{\nabla_{X_i}^T f_i(X^*)(Z_i-X_i^*)} \geq 0$ for all $Z_i\in \mathcal X_i$ \nm{and all} $i=1,\ldots,N$.
Then, we \nm{obtain} for all $i=1,\cdots,N$ 
\begin{align}
\label{eq:lemmfdefinition-2}
&\tr{\nabla_{X_i}^T f_i(  X^*)(  Z_i-  X_i^*)}=\notag\\&\sum_{u}\sum_{v}[\nabla_{X_i} f_i(  X^*)]_{uv}[Z_i-  X_i^*]_{uv}\geq 0.
\end{align}
Invoking the definition of mapping $F$ given by \eqref{Fdefinition} and \nm{from} \eqref{eq:lemmfdefinition-2}, we have 
$\tr{F(  X^*)^T(  Z-  X^*)}=\sum_{i,u,v}[\nabla_{X_i} f_i(  X^*)]_{uv}[Z_i-  X_i^*]_{uv}\geq 0.$
From the definition of VI($\mathbf{\mathcal X}, F$) and relation \eqref{eq:VI2}, we conclude that $  X^*\in \text{SOL}({\mathcal X}, \mathit F)$. Conversely, suppose $  X^* \in \text{SOL}({\mathcal X}, \mathit F)$. Then, $\tr{F(  X^*)^T(  Z-  X^*)} \geq 0, \text{for all} \:   Z \in \mathcal X$. Consider a fixed $i \in \{1,\ldots,N\}$ and a matrix $\bar{  Z} \in \mathcal X$ given by \eqref{xdefinition} such that the only difference between $  X^\ast$ and $\bar{  Z}$ is in $i$-th block, i.e. 
\begin{align*}
\bar{  Z}= \text{diag}\left(\left[  X_1^*\right],\ldots,\left[  X_{i-1}^*\right],\left[  Z_i\right],\left[  X_{i+1}^*\right],\ldots,\left[  X_{N}^*\right] \right),
\end{align*}
where $  Z_i$ is an arbitrary matrix in $\mathcal X_i$. Then, we have
\begin{align}
\label{eq:z-x}
\bar{  Z}-  X^*= \text{diag}\left(\mathbf 0_{n_1\times n_1},\ldots,\left[  Z_{i} -  X_i^*\right],\ldots,\mathbf 0_{n_N\times n_N} \right).
\end{align}
Therefore, substituting $\bar{  Z}-  X^*$ by term \eqref{eq:z-x}, we obtain 
\begin{align*}
	\tr{F(  X^*)^T(\bar{  Z}-  X^*)}=\sum_{u}\sum_{v}[\nabla_{  X_i} f_i(  X^*)]_{uv}\nonumber \\
	\times[(  Z_i-  X_i^*)]_{uv}=\tr{\nabla_{  X_i}^T f_i(  X^*)(  Z_i-  X_i^*)}\geq 0.
\end{align*}
Since $i$ was chosen arbitrarily, $\tr{\nabla_{  X_i}^T f_i(  X^*)(  Z_i-  X_i^*)}\geq 0$ for any $i=1,...,N$. Hence, by applying Lemma \ref{lemma:optimality} we conclude that $  X^*$ is a Nash equilibrium to game \eqref{eqn:problem1}. 
\end{proof}
\nm{Algorithm \ref{alg} presents the outline of the AM-SMD method. At each iteration $t$, first, using an oracle, a realization of the stochastic mapping $F$ is generated at $X_t$, denoted by $\Phi(  X_t, \xi_t)$. Next, a matrix $Y_t$ is updated using \eqref{eq:y}. Here $\eta_t$ is a non-increasing step-size sequence. Then, $Y_t$ will be projected onto set $\mathcal X$ using the closed-form solution \eqref{eq:x}. Then the averaged sequence $\overline{X}_{t+1}$ is generated using relations $\eqref{eq:gamma}$.}
Next, we state the main assumptions. Let us define the stochastic error at iteration $t$ as
\begin{align}
\label{eq:z-definition}
	   Z_{t} :\triangleq \Phi(  X_t, \xi_t)- F(  X_t) \quad \text{for all}  \quad t\geq 0.
\end{align}
Let $\mathcal F_t$ denote the history of the algorithm up to time $t$, i.e., $\mathcal F_t=\{  X_0, \xi_0,\ldots,\xi_{t-1}\}$ for $t\geq 1$ and $\mathcal F_0=\{  X_0\}$. 
\begin{assumption} Let the following hold:
\label{ass:boundphi}
\begin{itemize}
\item [(a)] The mapping $F(  X)=\EX[\Phi(  X_t,\xi_t)]$ is monotone and continuous over the set $\mathbf{\mathcal X}$. 
\item [(b)] The stochastic mapping $\Phi(  X_t,\xi_t)$ has a finite mean squared error, i.e, there exist some $C>0$ such that $\EX[\Vert\Phi(  X_t,\xi_t)\Vert^2_2|\mathcal F_t] \leq C^2$.\nm{ (Under this assumption, the mean squared error of the stochastic noise is bounded.)}
\item [(c)] The stochastic noise $  Z_t$ has a zero mean, i.e., $\EX[  Z_t|\mathcal F_t]=0$ for all $t\geq 0$.
\end{itemize}
\end{assumption} 
\begin{algorithm}
 \caption{Averaging Matrix Stochastic Mirror Descent (AM-SMD)}
\label{alg}
\begin{algorithmic}
     \STATE \textbf{initialization}: Set $  Y_0:=  I_n/n$, a stepsize $\eta_0 > 0$, $\Gamma_0=\eta_0$ and let $  X_0 \in \mathbf{\mathcal X}$ and $\overline{  X}_{0}=  X_0$. 
      \FOR {$t=0,1,...,T-1$}	
				\STATE Generate $\xi_t$ as realizations of the random matrix $\xi$ and \nm{evaluate} the mapping $\Phi(  X_t, \xi_t)$. Let 
				\begin{align} \label{eq:y}
				&Y_{t+1} :=   Y_t - \eta_{t}  \Phi(X_t, \xi_t),\\
				\label{eq:x}
				&X_{t+1}:=\displaystyle\frac{\exp(  Y_{t+1}+  I_n)}{\tr{\exp(  Y_{t+1}+  I_n)}}.
				\end{align}
					\STATE Update $\Gamma_t$ and $\overline{  X}_{t}$ using the following recursions:
					\begin{align}\label{eq:gamma}
						 &\Gamma_{t+1}:=\Gamma_{t}+\eta_{t+1},~\overline{X}_{t+1}:=\frac{\Gamma_t\overline{X}_t+\eta_{t+1}{X}_{t+1}}{\Gamma_{t+1}}.
					\end{align}
\ENDFOR 
\STATE Return $\overline{X}_{T}$.
\end{algorithmic}
\end{algorithm}
\section{Convergence analysis} \label{sec:convergence}
\nm{In this section, our interest lies in analyzing the convergence and deriving a rate statement for the sequence generated by the AM-SMD method.}
\nm{Note that a solution of VI(${\mathcal X}, F$) is also called a strong solution. Next, we define a weak solution which is considered to be a counterpart of the strong solution.}
\begin{defn} \label{def:stable}(Weak solution) The matrix ${X}^*_w \in \mathcal X$ is called a weak solution to VI($\mathbf{\mathcal X}, F$) if it satisfies 
$\tr{F(  X)(  {X}-  {X}^*_w)} \geq 0$, for all $X \in \mathbf{\mathcal X}.$
\end{defn}
Let us denote $\mathcal X^\star_w$ and $\mathcal X^*$  the set of weak solutions and strong solutions to VI($\mathbf{\mathcal X}, F$), respectively.
\begin{rem}  Under Assumption \ref{ass:boundphi}(a), when the mapping $F$ is monotone, any strong solution of problem \eqref{eq:VI} is a weak solution, i.e., $\mathcal X^* \subseteq \mathcal X^\star_w$. 
Providing that $F$ is also continuous, the inverse also is true and a weak solution is a strong solution. Moreover, for a monotone mapping $F$ on a convex compact set e.g., $\mathcal X$, a weak solution always exists \cite{juditsky2011solving}. 
\end{rem}

\nm{Unlike optimization problems where the function provides a metric for distinguishing solutions, there is no immediate analog in VI problems. However, we use the following residual function associated with a VI problem.}
\begin{defn} ($G$ function) Define the following function $G: \mathcal X \to \mathbb R$ as
\label{def:gap}
\begin{align*}
	G(  {X})= \underset {  Z \in \mathcal X} {\sup}\tr{F(  Z)(  {X}-  {Z})}, \quad \text{for all}~   {X} \in  \mathcal X.
\end{align*}
\end{defn}
The next lemma provides some properties of the $G$ function.
\begin{lemma}
\label{lm:gap properties2}
The function $G({X})$ given by Definition \ref{def:gap} is a well-defined gap function, i.e, $(i)$ $G(  {X})\geq 0$ for all $  X \in \mathcal X$; $(ii)$ $  X^*_w$ is a weak solution to problem \eqref{eq:VI} iff $G(  {X}^*_w)=0$.
\end{lemma}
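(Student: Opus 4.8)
The plan is to verify the two defining properties directly from Definition \ref{def:gap}, namely $G(X)=\sup_{Z\in\mathcal X}\tr{F(Z)(X-Z)}$, relying only on the fact that the point $Z=X$ is always feasible in the supremum together with the weak-solution condition of Definition \ref{def:stable}. Before that, I would briefly confirm that $G$ is genuinely real-valued (finite). Since $\mathcal X=\{X\in\mathbb S_n^+:\tr{X}=1\}$ is compact and $F$ is continuous by Assumption \ref{ass:boundphi}(a), the map $Z\mapsto\tr{F(Z)(X-Z)}$ is continuous on the compact set $\mathcal X$, so the supremum is attained and finite for every $X\in\mathcal X$. This is what makes $G$ well-defined.

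For part $(i)$, I would observe that any $X\in\mathcal X$ is itself admissible in the supremum defining $G(X)$. Choosing $Z=X$ gives $\tr{F(X)(X-X)}=\tr{F(X)\cdot\mathbf 0}=0$, and since the supremum dominates this particular choice, we conclude $G(X)\geq 0$ for all $X\in\mathcal X$.

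For part $(ii)$, both implications follow by unwinding the supremum and a sign manipulation. For the forward direction, assume $X^*_w$ is a weak solution. Applying Definition \ref{def:stable} with the arbitrary feasible point played by $Z$ yields $\tr{F(Z)(Z-X^*_w)}\geq 0$, equivalently $\tr{F(Z)(X^*_w-Z)}\leq 0$, for every $Z\in\mathcal X$. Taking the supremum over $Z\in\mathcal X$ gives $G(X^*_w)\leq 0$, which combined with part $(i)$ forces $G(X^*_w)=0$. For the converse, suppose $G(X^*_w)=0$. Then every term in the supremum is nonpositive, i.e., $\tr{F(Z)(X^*_w-Z)}\leq 0$ for all $Z\in\mathcal X$ (otherwise a positive term would make the supremum strictly positive). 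Rewriting this as $\tr{F(Z)(Z-X^*_w)}\geq 0$ for all $Z\in\mathcal X$ is exactly the weak-solution condition of Definition \ref{def:stable}, so $X^*_w\in\mathcal X^\star_w$.

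I do not expect a substantive obstacle here: the argument is essentially a restatement of the supremum together with the trivial feasibility of $Z=X$. The only point requiring care is the finiteness claim underlying well-definedness, which is where compactness of $\mathcal X$ and continuity of $F$ are genuinely used; the remaining steps are elementary sign bookkeeping.
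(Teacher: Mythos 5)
Your proof is correct and follows essentially the same route as the paper's own argument: part $(i)$ via the feasible choice $Z=X$ in the supremum, and part $(ii)$ by taking the supremum of the weak-solution inequality for the forward direction and unwinding the supremum by a sign argument for the converse. Your added observation that compactness of $\mathcal X$ and continuity of $F$ guarantee finiteness of the supremum is a small refinement the paper leaves implicit, but it does not change the substance of the proof.
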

\begin{proof}
$(i)$ For an arbitrary $  X \in \mathcal X$, we have
\begin{align*}
		G(  {X})= \underset {  Z \in \mathcal X} {\sup}\tr{F(  Z)(  {X}-  {Z})} \geq \tr{F(  A)(  {X}-  {A})},
\end{align*}
for all ${A} \in  \mathcal X$. For $A=X$, the above inequality suggests that $G(  {X})\geq \tr{F(  X)(  {X}-  {X})}=0$ implying that the function $G(  {X})$ is nonnegative for all $  {X} \in  \mathcal X$. \\
$(ii)$ Assume $  X^*_w$ is a weak solution. By Definition \ref{def:stable}, $\tr{F(  X)(  {X}^*_w-  {X})} \leq 0$, for all $X \in \mathbf{\mathcal X}$ which implies 
$G(  {X}^*_w)= \underset {  X \in \mathcal X} {\sup}\tr{F(  X)(  {X}^*_w-  {X})}	 \leq 0$.
On the other hand, from Lemma \ref{lm:gap properties2}$(i)$, we get $G(  {X}^*_w) \geq 0$. We conclude that $G(  {X}^*_w) = 0$ for any weak solution ${X}^*_w$. 
Conversely, assume that there exists an ${X}$ such that $G(  {X}) = 0$. Therefore, $\underset {  Z \in \mathcal X} {\sup}\tr{F(  Z)(  {X}-  {Z})}=0$ which implies $\tr{F(  Z)(  {Z}-  {X})} \geq 0$ for all $  Z \in \mathcal X$ implying ${X}$ is a weak solution.
   \end{proof}
\begin{rem} 
\label{lemma:average}
Assume the sequence $\eta_t$ is non-increasing and the sequence $\overline{X}_{t}$ is given by the recursive rules \eqref{eq:gamma} where $\Gamma_0=\eta_0$ and $\overline{  X}_{0}={  X}_{0}$. Then, using induction, it can be shown that $\overline{X}_{t}:=\sum_{k=0}^{t} \left(\frac{\eta_k}{\sum_{k'=0}^{t} \eta_{k'}}\right)  X_k$ for any $t\geq 0$.
\end{rem}
Next, we derive the conjugate of the quantum entropy and its gradient. 
\begin{pre} 
Let $  Y \in \mathbb S_n$ and $\omega(X)$ be defined as \eqref{eq:entropy}. Then, we have 
\begin{align}
\label{eq:conjugate}
&\omega^*(Y)=\log(\tr{\exp(Y+I_n)}),\\
\label{eq:gradient-omega-star}
&\nabla\omega^*(Y)=\frac{\exp(Y+I_n)}{\tr{\exp(Y+I_n)}}.
\end{align}
\end{pre}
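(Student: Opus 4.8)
The plan is to compute $\omega^*$ directly from the definition of the Fenchel conjugate and then obtain $\nabla\omega^*$ either as the unique maximizer or, more cleanly, by differentiating the resulting closed form. Writing the conjugate with respect to the trace inner product $\langle X,Y\rangle=\tr{XY^T}$ and recalling that $\omega$ is finite only on $\mathcal X=\{X\in\mathbb S_n^+:\tr{X}=1\}$, I would start from
\begin{align*}
\omega^*(Y)=\sup_{X\in\mathcal X}\left\{\tr{XY}-\tr{X\log X}+\tr{X}\right\}=\sup_{X\in\mathcal X}\left\{\tr{XY}-\tr{X\log X}\right\}+1,
\end{align*}
where the last equality uses $\tr{X}=1$ on $\mathcal X$. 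Since $Y\in\mathbb S_n$, both $X$ and $Y$ are symmetric and $\tr{XY}=\tr{XY^T}$, so no transpose bookkeeping is needed.

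Second, I would solve this constrained maximization. The objective $X\mapsto\tr{XY}-\tr{X\log X}$ is strictly concave on the spectrahedron $\mathcal X$ (the von Neumann entropy $-\tr{X\log X}$ is strictly concave and the linear term preserves this), hence the maximizer is unique. Observing that any candidate of exponential form is positive definite, the positive semidefinite constraint is inactive at the optimum, so it suffices to attach a single Lagrange multiplier $\mu$ to the trace constraint $\tr{X}=1$. Using the matrix identities $\nabla_X\tr{XY}=Y$ and $\nabla_X\tr{X\log X}=\log X+I_n$, the stationarity condition $Y-\log X-\mu I_n=0$ yields $X=\exp(Y-\mu I_n)=e^{-\mu}\exp(Y)$; enforcing $\tr{X}=1$ fixes $e^{-\mu}=1/\tr{\exp(Y)}$, i.e.
\begin{align*}
X^\star=\frac{\exp(Y)}{\tr{\exp(Y)}}=\frac{\exp(Y+I_n)}{\tr{\exp(Y+I_n)}},
\end{align*}
the two forms being equal because $\exp(Y+I_n)=e\,\exp(Y)$. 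Substituting $X^\star$ back and using $\log X^\star=Y-\mu I_n$ together with $\tr{X^\star}=1$ collapses the entropy term, leaving $\omega^*(Y)=\mu+1=\log\tr{\exp(Y)}+1=\log\tr{\exp(Y+I_n)}$, which is \eqref{eq:conjugate}.

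Third, for \eqref{eq:gradient-omega-star} I would differentiate the closed form directly rather than invoke the envelope theorem. The key fact is that for symmetric $A$ and any direction $H\in\mathbb S_n$ one has $\frac{d}{dt}\tr{\exp(A+tH)}\big|_{t=0}=\tr{\exp(A)H}$ (valid without commutativity by the series expansion and cyclicity of the trace), so the gradient of $Y\mapsto\tr{\exp(Y+I_n)}$ is $\exp(Y+I_n)$. Applying the chain rule to $\omega^*(Y)=\log\tr{\exp(Y+I_n)}$ then gives $\nabla\omega^*(Y)=\exp(Y+I_n)/\tr{\exp(Y+I_n)}$, which also coincides with the maximizer $X^\star$ as expected from Fenchel duality, and matches the projection formula \eqref{eq:x} used in Algorithm \ref{alg}.

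I expect the main obstacle to be the rigorous justification of the first-order optimality step in the matrix setting: establishing strict concavity so that the maximizer is unique, confirming that the optimal $X^\star$ is positive definite so the conic constraint is inactive and a single scalar multiplier suffices, and correctly computing $\nabla_X\tr{X\log X}=\log X+I_n$. The trace-exponential differentiation identity used for the gradient is standard but should be stated explicitly, since $Y$ and $H$ need not commute.
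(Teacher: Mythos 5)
Your proposal is correct and takes essentially the same route as the paper: both evaluate the Fenchel supremum over $\mathcal X$, identify the maximizer as the Gibbs state $\exp(Y+I_n)/\tr{\exp(Y+I_n)}$, substitute it back to get \eqref{eq:conjugate}, and then obtain \eqref{eq:gradient-omega-star} by the chain rule together with $\nabla_Y \tr{\exp(Y)}=\exp(Y)$. The only real difference is that the paper simply cites the Gibbs variational principle (Example 3.29 in \cite{hiai2014introduction}) for the maximizer, whereas you re-derive it from first principles via Lagrangian stationarity (checking strict concavity and that the semidefinite constraint is inactive), which makes the argument self-contained but is otherwise equivalent.
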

\begin{proof}
$\omega$ is a lower semi-continuous convex function on the linear space of all symmetric matrices. The conjugate of function $\omega$ can be defined as 
\begin{align*}
	\label{eq:}
 &\omega^*({Y})=\sup\{\tr{  D  Y}-\omega(  D) : ~  D \in \mathcal X\}\\
&=\sup\{\tr{ D  Y}-\tr{  D \log   D-  D} :  D \in \mathcal X \}\\
&=-\inf\{-\underbrace{\tr{  D(  Y+  I_n)}+\tr{  D \log   D}:   D \in \mathcal X}_{\text{Term 1}}\}.
\end{align*}
The minimizer of the above problem is $\displaystyle  D= \frac{\exp(  Y+  I_n)}{\tr{\exp(  Y+  I_n)}}$ which is called the Gibbs state (see \cite{hiai2014introduction}, Example 3.29). We observe that $D$ is a positive semidefinite matrix with trace equal to one, implying that $D \in \mathcal X$. By plugging it into Term 1, we have \eqref{eq:conjugate}. The relation \eqref{eq:gradient-omega-star} follows by standard matrix analysis and the fact that $\nabla_  {Y}\tr{\exp(  Y)}=\exp(  Y)$ \cite{athans1965gradient}.
 \end{proof}
Throughout, we use the notion of Fenchel coupling \cite{mertikopoulos2016learning2}:
\begin{equation}
\label{eq:fenchel}
H(  {Q},  {Y})\triangleq\omega(  {Q})+\omega^*(  {Y})-\tr{  {Q}  {Y}},
\end{equation}
 which provides a proximity measure between $  {Q}$ and $\nabla \omega^*(  {Y})$ and is equal to the associated Bregman divergence between $  {Q}$ and $\nabla \omega^*(  {Y})$. \nm{We also make use of the following Lemma which is proved in Appendix.}  
\begin{lemma} (\cite{mertikopoulos2017distributed})
\label{pre:smoothstrong}
For all matrices $X\in \mathcal X$ and for all ${Y},{Z} \in \mathbb S_n$, the following holds
\begin{equation}  
\label{eq:smoothstrong}
H(  {X},  {Y}+  {Z})\leq H(  {X},  {Y})+ \tr{  Z(\nabla \omega^*(  {Y})-  {X})}+\Vert   Z\Vert^2_2.
\end{equation}
\end{lemma}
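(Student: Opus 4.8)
The plan is to first strip the Fenchel coupling down to a smoothness (descent‑type) inequality for the conjugate $\omega^*$ alone. Expanding \eqref{eq:fenchel} on both sides of \eqref{eq:smoothstrong} and using $\tr{XZ}=\tr{ZX}$, the terms $\omega(X)$, $\tr{XY}$ and $\tr{XZ}$ all cancel, so that \eqref{eq:smoothstrong} is \emph{equivalent} to the scalar inequality
\begin{align*}
\omega^*(Y+Z)\leq \omega^*(Y)+\tr{Z\nabla\omega^*(Y)}+\Vert Z\Vert_2^2.
\end{align*}
It therefore suffices to establish this quadratic upper bound for the log‑partition function $\omega^*(\cdot)=\log\tr{\exp(\cdot+I_n)}$ given by \eqref{eq:conjugate}, whose gradient is the Gibbs state \eqref{eq:gradient-omega-star}.

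To this end I would fix $Y,Z\in\mathbb S_n$, introduce the scalar function $g(t):=\omega^*(Y+tZ)$ on $[0,1]$, and apply Taylor's theorem with integral remainder, $g(1)=g(0)+g'(0)+\int_0^1(1-t)\,g''(t)\,dt$. Writing $M(t):=Y+tZ+I_n$ and $\Phi(t):=\tr{\exp(M(t))}$, and using Duhamel's formula for the derivative of the matrix exponential together with cyclicity of the trace (as in the cited identity $\nabla_Y\tr{\exp(Y)}=\exp(Y)$), one gets $g'(t)=\tr{Z\nabla\omega^*(Y+tZ)}$, so $g'(0)$ reproduces the linear term. A second differentiation yields
\begin{align*}
g''(t)=\frac{\int_0^1\tr{Z\exp(sM)\,Z\exp((1-s)M)}\,ds}{\tr{\exp(M)}}-\left(\frac{\tr{Z\exp(M)}}{\tr{\exp(M)}}\right)^2,
\end{align*}
whose second summand is nonpositive and can be discarded.

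For the first summand I would diagonalize $M=U\Lambda U^{T}$ with $\Lambda=\mathrm{diag}(\lambda_i)$ and set $\tilde Z=U^{T}ZU$; integrating the spectral representation in $s$ produces divided differences,
\begin{align*}
\frac{\Phi''(t)}{\Phi(t)}=\frac{1}{\tr{\exp(M)}}\sum_{i,j}\tilde Z_{ij}^2\,\frac{e^{\lambda_i}-e^{\lambda_j}}{\lambda_i-\lambda_j}.
\end{align*}
The key estimate is the logarithmic‑mean inequality $\tfrac{e^{\lambda_i}-e^{\lambda_j}}{\lambda_i-\lambda_j}\leq \tfrac{e^{\lambda_i}+e^{\lambda_j}}{2}$, which, after exploiting the symmetry $\tilde Z_{ij}=\tilde Z_{ji}$, collapses the double sum to $\tr{Z^2\rho}$ where $\rho:=\exp(M)/\tr{\exp(M)}=\nabla\omega^*(Y+tZ)$ is a density matrix. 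Since $\rho\succeq 0$ and $\tr{\rho}=1$, we obtain $\tr{Z^2\rho}\leq \Vert Z^2\Vert_2=\Vert Z\Vert_2^2$, hence $g''(t)\leq\Vert Z\Vert_2^2$ uniformly in $t$; integrating the remainder gives $\int_0^1(1-t)g''(t)\,dt\leq\tfrac12\Vert Z\Vert_2^2\leq\Vert Z\Vert_2^2$, which is in fact stronger than required.

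The main obstacle is the second‑derivative computation: because $Y+tZ$ and $Z$ need not commute, $g''(t)$ is \emph{not} simply $\tr{Z^2\rho}$ but the Duhamel divided‑difference expression above, and all the delicacy lies in controlling those divided differences. The logarithmic‑mean inequality is precisely what converts this noncommutative quantity back into the clean trace $\tr{Z^2\rho}$ (equivalently, it certifies that $\omega^*$ has a $1$‑Lipschitz gradient in the spectral norm); identifying and justifying this step is the technical heart of the argument.
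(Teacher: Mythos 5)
Your proof is correct, and it proves something slightly stronger than the lemma, but it takes a genuinely different route from the paper. Both proofs share the same first move: expanding the Fenchel coupling \eqref{eq:fenchel} cancels $\omega(X)$, $\tr{XY}$, $\tr{XZ}$, reducing \eqref{eq:smoothstrong} to the smoothness bound $\omega^*(Y+Z)\leq \omega^*(Y)+\tr{Z\nabla\omega^*(Y)}+\Vert Z\Vert_2^2$. The paper then finishes in one line by citation: since $\omega$ is $1$-strongly convex w.r.t.\ the trace norm (Lemma~\ref{lm:strong}, from \cite{yu2013strong}), conjugate duality between strong convexity and strong smoothness \cite{kakade2009duality} yields that $\omega^*$ is $1$-smooth w.r.t.\ the dual (spectral) norm, which is exactly the needed inequality. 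You instead prove the smoothness of the log-partition function directly: Taylor expansion of $g(t)=\omega^*(Y+tZ)$, Duhamel's formula for $g''$, diagonalization to get the divided-difference kernel $\frac{e^{\lambda_i}-e^{\lambda_j}}{\lambda_i-\lambda_j}$, the logarithmic-mean versus arithmetic-mean inequality to dominate it by $\frac{e^{\lambda_i}+e^{\lambda_j}}{2}$, and finally trace/spectral H\"older with $\tr{\rho}=1$ to conclude $g''(t)\leq\Vert Z\Vert_2^2$; all of these steps are sound. What each approach buys: the paper's argument is short and modular but leans on two nontrivial external results (the strong convexity of quantum entropy under the trace norm, and the Kakade--Shalev-Shwartz--Tewari duality theorem); your argument is self-contained, in effect re-deriving the analytic content behind those citations, and it delivers the sharper constant $\tfrac12\Vert Z\Vert_2^2$ in place of $\Vert Z\Vert_2^2$ (your integral remainder gives $\int_0^1(1-t)g''(t)\,dt\leq\tfrac12\Vert Z\Vert_2^2$). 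The price is the noncommutative second-derivative computation, which, as you correctly identify, is the technical heart: $g''$ is not $\tr{Z^2\rho}$ when $Y+tZ$ and $Z$ fail to commute, and the logarithmic-mean estimate is precisely what restores that bound.
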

Next, we develop an error bound for the G function. For simplicity of notation we use $\Phi_t$ to denote $\Phi({X}_t,\xi_t)$.  
\begin{lemma}
\label{lemma:convergence}
Consider problem \eqref{eq:VI}. Let $  X \in \mathcal X$ and the sequence $\{\overline{  X}_t\}$ be generated by AM-SMD algorithm. Suppose Assumption \ref{ass:boundphi} holds. Then, for any $T \geq 1$, 
\begin{align}  
\label{eq:bound-1}
 \EX[G(\overline{  X}_T)] &\leq 2\left(\frac{\log(n)+\sum_{t=0}^{T-1}\eta_t^{2} C^2}{\sum_{t=0}^{T-1}\eta_t}\right).
\end{align}
\end{lemma}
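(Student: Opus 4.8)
The plan is to use the Fenchel coupling $H(\cdot,\cdot)$ of \eqref{eq:fenchel} as a Lyapunov function and telescope its increments, as in mirror-descent analyses, and then to control the stochastic error through an \emph{auxiliary mirror-descent sequence}. Fix an arbitrary $X\in\mathcal X$. Since \eqref{eq:x} gives $X_t=\nabla\omega^*(Y_t)$, applying Lemma \ref{pre:smoothstrong} with the increment $Z=-\eta_t\Phi_t$ yields the one-step inequality
\[
\eta_t\tr{\Phi_t(X_t-X)}\le H(X,Y_t)-H(X,Y_{t+1})+\eta_t^2\Vert\Phi_t\Vert_2^2.
\]
Summing over $t=0,\dots,T-1$ telescopes the coupling terms; dropping the nonnegative $H(X,Y_T)$ and bounding the initial coupling by $H(X,Y_0)=D(X,I_n/n)\le\log(n)$ — which holds \emph{uniformly} in $X$ because $\nabla\omega^*(Y_0)=I_n/n$ and the von Neumann divergence to the maximally mixed state is at most $\log n$ — gives $\sum_{t=0}^{T-1}\eta_t\tr{\Phi_t(X_t-X)}\le\log n+\sum_{t=0}^{T-1}\eta_t^2\Vert\Phi_t\Vert_2^2$.

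Next I would substitute $\Phi_t=F(X_t)+Z_t$ from \eqref{eq:z-definition} and invoke monotonicity (Assumption \ref{ass:boundphi}(a)): since $\tr{(X_t-X)(F(X_t)-F(X))}\ge0$, one may replace $F(X_t)$ by $F(X)$ to obtain a lower bound, so that $\sum_t\eta_t\tr{F(X)(X_t-X)}\le\log n+\sum_t\eta_t^2\Vert\Phi_t\Vert_2^2-\sum_t\eta_t\tr{Z_tX_t}+\sum_t\eta_t\tr{Z_tX}$. Dividing by $\Gamma_{T-1}=\sum_{t=0}^{T-1}\eta_t$ and using the weighted-average identity of Remark \ref{lemma:average}, the left side equals $\tr{F(X)(\overline X_T-X)}$ by linearity. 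Taking the supremum over $X\in\mathcal X$ reproduces $G(\overline X_T)$ from Definition \ref{def:gap}; every $X$-dependent term on the right is then harmless except $\sum_t\eta_t\tr{Z_tX}$, whose supremum over $X$ is coupled to the noise.

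The crux is to bound $\EX[\sup_{X\in\mathcal X}\sum_t\eta_t\tr{Z_tX}]$, and this is where the auxiliary sequence enters. I introduce $\widehat Y_0:=Y_0$, $\widehat Y_{t+1}:=\widehat Y_t+\eta_tZ_t$, and $\widehat X_t:=\nabla\omega^*(\widehat Y_t)$, and apply Lemma \ref{pre:smoothstrong} once more, now with the increment $+\eta_tZ_t$, to get $\eta_t\tr{Z_tX}\le\eta_t\tr{Z_t\widehat X_t}+H(X,\widehat Y_t)-H(X,\widehat Y_{t+1})+\eta_t^2\Vert Z_t\Vert_2^2$. Telescoping and again using $H(X,\widehat Y_0)\le\log n$ and $H(X,\widehat Y_T)\ge0$ yields $\sum_t\eta_t\tr{Z_tX}\le\sum_t\eta_t\tr{Z_t\widehat X_t}+\log n+\sum_t\eta_t^2\Vert Z_t\Vert_2^2$, whose right side is \emph{free of} $X$, so the supremum passes through at no cost. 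Because $X_t$ and $\widehat X_t$ are $\mathcal F_t$-measurable, Assumption \ref{ass:boundphi}(c) forces $\EX[\tr{Z_t\widehat X_t}]=\EX[\tr{Z_tX_t}]=0$, while Assumption \ref{ass:boundphi}(b) gives $\EX[\Vert\Phi_t\Vert_2^2\mid\mathcal F_t]\le C^2$ and $\EX[\Vert Z_t\Vert_2^2\mid\mathcal F_t]\le C^2$. Taking total expectations and collecting terms, the two martingale sums vanish and I am left with two identical contributions $\log n+\sum_t\eta_t^2C^2$ — one from the main telescoping, one from the auxiliary telescoping — which, after dividing by $\Gamma_{T-1}$, is exactly the factor $2$ in \eqref{eq:bound-1}.

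The main obstacle is precisely this supremum–noise coupling in $\sup_X\sum_t\eta_t\tr{Z_tX}$: a naive estimate fails because the maximizing $X$ may be correlated with the errors $Z_t$. The auxiliary sequence $\{\widehat Y_t\}$ is the device that linearizes this supremum into a telescoping Fenchel-coupling difference plus a zero-mean martingale term, and the delicate point is verifying $\mathcal F_t$-measurability of $\widehat X_t$ so that the martingale term indeed vanishes in expectation. A minor technical caveat is the initial iterate, where one takes $X_0=\nabla\omega^*(Y_0)=I_n/n$ to keep the $t=0$ step of the telescoping consistent.
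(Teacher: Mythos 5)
Your proof is correct and follows essentially the same route as the paper's: the same Fenchel-coupling one-step inequality via Lemma \ref{pre:smoothstrong}, the same monotonicity substitution, and the same noise-driven auxiliary sequence --- your $(\widehat Y_t,\widehat X_t)$ is exactly the paper's $(U_t,V_t)$ with $V_t=\nabla\omega^*(U_t)$ --- followed by the identical telescoping, martingale, and $H(X,\cdot)\le\log(n)$ estimates that produce the factor $2$. The only differences are cosmetic (order of telescoping versus the monotonicity step, and how the two martingale terms are grouped), so nothing further is needed.
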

  \begin{proof}
From the definition of $Z_t$ in relation \eqref{eq:z-definition}, the recursion in the AM-SMD algorithm can be stated as
\begin{equation}  
\label{eq:lem-Error bounds-1}
  {Y}_{t+1}=  {Y}_t-\eta_t (  F(X_t)+  Z_t).
\end{equation}
\nm{Consider \eqref{eq:smoothstrong}. From Algorithm \ref{alg} and \eqref{eq:gradient-omega-star}, we have $X_t=\nabla\omega^*(Y_t)$. Let $Y:=Y_t$ and $Z:=-\eta_t (F(X_t)+ Z_t)$.} From \eqref{eq:lem-Error bounds-1}, we obtain
\begin{align*}  
&H(  {X},  Y_{t+1})\leq H(  {X},  Y_{t})-\eta_t \tr{(  X_t-  X)(  F(  X_t)+  Z_t)}\\&+\eta_t^2\Vert   F(  X_t)+  Z_t\Vert^2_2.
\end{align*}
By adding and subtracting $\eta_t \tr{(X_t-X)F(X)}$, we get
\begin{align}
\label{eq:mon}  
&H(  {X},  Y_{t+1}) 
\leq H(  {X},  Y_{t})-\eta_t \tr{(  X_t-  X)  Z_t}\notag\\&-\eta_t \tr{(  X_t-  X)  F(  X)}+\eta_t^2\Vert   F(  X_t)+  Z_t\Vert^2_2,
\end{align}  
where we used the monotonicity of mapping $F$. 
Let us define an auxiliary sequence $U_{t}$ such that $  U_{t+1}:\triangleq  U_{t}+\eta_t  Z_{t}$, where $  U_0 =\mathbf I_n$ and define $  V_{t}:\triangleq   F(  U_{t})$. From \eqref{eq:mon}, invoking the definition of $  Z_{t}$ and by adding and subtracting $  V_t$, we obtain
\begin{align}  
\label{eq:13}
&\eta_t \tr{(X_t-X)F(X)} \leq H(X,Y_t)-H(X,Y_{t+1})+\notag\\
&\eta_t \tr{(V_t-X_t)Z_t}+\eta_t \tr{(X-V_t)Z_t}+\eta_t^2\Vert \Phi_t\Vert^2_2.
\end{align}
Then, we estimate the term $\eta_t \tr{(X-V_t)Z_t}$. By Lemma \ref{pre:smoothstrong} and setting $  Y:=  U_{t}$ and $  Z:=\eta_t  Z_t$, we get
\begin{align*}
\eta_t \tr{(X-V_t)Z_t}&\leq H({X},U_{t})-H({X},U_{t+1})\\&+\eta_t^2\Vert Z_t\Vert^2_2.
\end{align*}
By plugging the above inequality into \eqref{eq:13}, we get
\begin{align*}
&\eta_t \tr{(X_t-X) F(X)}\leq H({X},Y_{t})-H({X},Y_{t+1})\\&+H({X},U_{t})-H({X},  U_{t+1})+\eta_t^2\Vert   Z_t\Vert^2_2\\
	&+\eta_t \tr{(  V_t-  X_t)   Z_t}+\eta_t^2\Vert \Phi_t\Vert^2_2. 
\end{align*}
By summing the above inequality form $t=0$ to $T-1$, and rearranging the terms, we have
\begin{align*} 
&\sum\nolimits_{t=0}^{T-1}\eta_t \tr{(X_t-X)F(X)}\leq H({X},Y_{0})-H(  {X},  Y_{T})
\end{align*}
\begin{align} 
&+H({X},U_{0})-	H({X},U_{T})+\sum\nolimits_{t=0}^{T-1}\eta_t^{2}\Vert Z_t\Vert^2_2+\nonumber\\ 
&\sum\nolimits_{t=0}^{T-1}\eta_t \tr{(  V_t-  X_t)   Z_t}+\sum\nolimits_{t=0}^{T-1}\eta_t^{2}\Vert \Phi_t\Vert^2_2 \nonumber\\ 
\label{eq:lmerror-4}  
 &\leq H(  {X},  Y_{0})+H(  {X},  U_{0})+\sum\nolimits_{t=0}^{T-1}\eta_t^{2}\Vert   Z_t\Vert^2_2+\nonumber\\&\sum\nolimits_{t=0}^{T-1}\eta_t \tr{(  V_t-  X_t)   Z_t}+\sum\nolimits_{t=0}^{T-1}\eta_t^{2}\Vert \Phi_t\Vert^2_2,	
\end{align}
\nm{where the last inequality holds by $H({X},Y)\geq 0$ \cite{mertikopoulos2017distributed}. By choosing $Y_{0}=U_{0}=\mathbf I_n$ and recalling that  for $X \in \mathcal X$, $\tr{X}=1$ and $-\log(n)\leq\tr{X \log X}\leq 0$ \cite{carlen2010trace}, from \eqref{eq:entropy}, \eqref{eq:conjugate} and \eqref{eq:fenchel},}
\begin{align*}
	&H(  {X},  Y_{0})=H(  {X},  U_{0})=\tr{  X \log   X-  X}-\tr{X}\\&+\log\tr{\exp(2\mathbf I_n)} \leq 0-1-1+\log(n) \leq \log(n).
\end{align*}
Plugging the above inequality into \eqref{eq:lmerror-4} yields
\begin{align}  
\label{eq:bound-5}
&\sum\nolimits_{t=0}^{T-1}\eta_t \tr{(X_t-X) F(X)}=\sum\nolimits_{t=0}^{T-1}\eta_t \text{tr}((  X_t-  X)\nonumber\\&  F(  X)) 
 \leq 2\log(n)+\sum\nolimits_{t=0}^{T-1}\eta_t^{2}\Vert   Z_t\Vert^2_2+\nonumber\\
&\sum\nolimits_{t=0}^{T-1}\eta_t \tr{(  V_t-  X_t)   Z_t} +\sum\nolimits_{t=0}^{T-1}\eta_t^{2}\Vert \Phi_t\Vert^2_2.
\end{align}
Let us define $\gamma_t:\triangleq\frac{\eta_t}{\sum_{k=0}^{T-1} \eta_k}$ and $\overline {  X}_T:\triangleq\sum_{t=0}^{T-1}\gamma_t   X_t$. We divide both sides of \eqref{eq:bound-5} by $\sum_{t=0}^{T-1} {\eta_t}$. Then for all $X\in \mathcal X$,
\begin{align*}  
&\tr{\left(\sum_{t=0}^{T-1} \gamma_t  X_t-X\right)  F(X)} =
\tr{\left(\overline{  X}_T-  X\right)  F(  X)}  \\
&\leq \frac{1}{\sum_{t=0}^{T-1}\eta_t} \Bigg(2\log(n)+\sum_{t=0}^{T-1}\eta_t^{2}\Vert Z_t\Vert^2_2\nonumber \\&+\sum\nolimits_{t=0}^{T-1}\eta_t \tr{(  V_t-  X_t)Z_t}+\sum\nolimits_{t=i}^{T}\eta_t^{2}\Vert \Phi_t\Vert^2_2\Bigg).
\end{align*}
The set $\mathcal X$ is a convex set. Since $\gamma_t>0$ and $\sum_{t=0}^{T-1}\gamma_t=1$,  $\overline{X}_T \in \mathcal X$. 
Now, we take the supremum over the set $\mathcal X$ with respect to $X$ and use the definition of the $G$ function. Note that the right-hand side of the above inequality is independent of $  X$.
\begin{align*}  
&G(\overline{  X}_T) \leq \frac{1}{\sum_{t=0}^{T-1}\eta_t}\Bigg(2\log(n)+\sum_{t=0}^{T-1}\eta_t^{2}\Vert   Z_t \Vert ^2_2+\\
&\sum\nolimits_{t=0}^{T-1}\eta_t \tr{(  V_t-  X_t)   Z_t}+\sum\nolimits_{t=0}^{T-1}\eta_t^{2}\Vert \Phi_t\Vert^2_2 \Bigg).
\end{align*}
By taking expectations on both sides, we get 
\begin{align*}  
&\EX[G(\overline{  X}_T)] \leq \frac{1}{\sum_{t=0}^{T-1}\eta_t}\Bigg(2\log(n)+\sum_{t=0}^{T-1}\eta_t^{2}\EX[\Vert   Z_t \Vert ^2_2]+\\&\sum\nolimits_{t=0}^{T-1}\eta_t \EX[\tr{(  V_t-  X_t)   Z_t}]+\sum\nolimits_{t=0}^{T-1}\eta_t^{2}\EX[\Vert \Phi_t\Vert^2_2] \Bigg).
\end{align*}
By definition, both $X_t$ and $V_t$ are $\mathcal F_t$-measurable. Therefore, $V_t-X_t$ is $\mathcal F_t$-measurable. In addition, $Z_t$ is $\mathcal F_{t+1}$-measurable. Thus, by Assumption \ref{ass:boundphi}(c), we have $\EX[\tr{(  V_t-  X_t)   Z_t}|\mathcal F_t]=0$. Applying Assumption \ref{ass:boundphi}(b), 
\begin{align*}  
\EX[G(\overline{  X}_T)] &\leq  \frac{2}{\sum\nolimits_{t=0}^{T-1}\eta_t}  \left(\log(n)+\sum\nolimits_{t=0}^{T-1}\eta_t^{2} C^2\right). 
\end{align*}
  \end{proof}
Next, we present convergence rate of the AM-SMD scheme.
\begin{thm} 
Consider problem \eqref{eq:VI} and let the sequence $\{\overline{  X}_t\}$ be generated by AM-SMD algorithm. Suppose Assumption \ref{ass:boundphi} holds. Then,
\begin{align}
\label{eq:stepsize}
	&\eta_t=\frac{1}{C}\sqrt{\frac{\log(n)}{T}}, \quad \text{for all} \quad t\geq 0, 
	\\
	\label{eq:rate}
	&\EX[G(\overline{  X}_T)] \leq 3C\sqrt{\frac{\log(n)}{T}}={\cal O}\left(\frac{1}{\sqrt {T}}\right).
\end{align}
\end{thm}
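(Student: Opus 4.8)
The plan is to derive this rate as an immediate corollary of Lemma~\ref{lemma:convergence}, which already provides the general bound $\EX[G(\overline{X}_T)] \le 2\left(\log(n)+\sum_{t=0}^{T-1}\eta_t^{2} C^2\right)/\sum_{t=0}^{T-1}\eta_t$ for any non-increasing stepsize sequence. The only remaining task is to specialize this to the constant stepsize in~\eqref{eq:stepsize} and simplify the resulting expression.

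First I would substitute $\eta_t = \tfrac{1}{C}\sqrt{\log(n)/T}$ and evaluate the two sums in the bound. Because the stepsize is constant over the $T$ iterations, the denominator becomes $\sum_{t=0}^{T-1}\eta_t = T\cdot\tfrac{1}{C}\sqrt{\log(n)/T} = \tfrac{1}{C}\sqrt{T\log(n)}$, and the squared-stepsize sum becomes $\sum_{t=0}^{T-1}\eta_t^{2} C^2 = T\cdot\tfrac{\log(n)}{T} = \log(n)$. Substituting these into the numerator yields $\log(n)+\log(n)=2\log(n)$; dividing by the denominator and carrying the leading factor $2$ then collapses everything to a bound of order $C\sqrt{\log(n)/T} = \mathcal{O}(1/\sqrt{T})$, matching~\eqref{eq:rate} up to the value of the leading constant.

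To motivate the particular choice in~\eqref{eq:stepsize}, I would observe that it is exactly the minimizer of the Lemma~\ref{lemma:convergence} bound over constant stepsizes: writing that bound as $2\log(n)/(T\eta)+2C^2\eta$ and setting its derivative in $\eta$ to zero gives $\eta=\tfrac{1}{C}\sqrt{\log(n)/T}$, so this stepsize balances the bias term against the variance term and explains the $1/\sqrt{T}$ scaling. The main point to emphasize is that there is essentially no obstacle in this final step: all of the analytical difficulty --- the Fenchel-coupling recursion~\eqref{eq:smoothstrong}, the auxiliary-sequence $U_t$ device, and the conditioning argument that eliminates the cross term $\EX[\tr{(V_t-X_t)Z_t}]$ --- has already been absorbed into Lemma~\ref{lemma:convergence}. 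The theorem is thus a one-line corollary modulo arithmetic, and the only item warranting care is reconciling the precise leading constant (a direct substitution appears to give $4C$ rather than the stated $3C$), which would require re-tracking the constants through the proof of the lemma.
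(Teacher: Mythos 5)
Your proposal follows exactly the same route as the paper's own proof: the paper likewise specializes the bound \eqref{eq:bound-1} of Lemma \ref{lemma:convergence} to a constant stepsize, obtains \eqref{eq:stepsize} by minimizing $2\log(n)/(T\eta)+2C^2\eta$ over $\eta>0$, and plugs it back in. The constant discrepancy you flag is real and lies in the paper, not in your arithmetic: direct substitution of \eqref{eq:stepsize} into \eqref{eq:bound-1} yields $4C\sqrt{\log(n)/T}$, and the stated factor $3$ in \eqref{eq:rate} would follow only if the numerator of \eqref{eq:bound-1} read $2\log(n)+\sum_{t=0}^{T-1}\eta_t^{2}C^2$ (a single noise term rather than two), so the theorem's constant should be $4C$ while the ${\cal O}\left(1/\sqrt{T}\right)$ rate itself is unaffected.
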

  \begin{proof}
\nm{Consider relation \eqref{eq:bound-1}.} Assume that the number of iterations $T$ is fixed and $\eta_t=\eta$ for all $t \geq 0$, then, we get
\begin{align*}  
\EX[G(\overline{  X}_T)] &\leq \frac{2 (\log(n)+T\eta^{2} C^2)}{T\eta}.
\end{align*}
Then, by minimizing the right-hand side of the above inequality over $\eta>0$, we obtain the constant stepsize \eqref{eq:stepsize}.
By plugging \eqref{eq:stepsize} into \eqref{eq:bound-1}, we obtain \eqref{eq:rate}.
  \end{proof}

\section{Numerical experiments}
\label{sec:num}
In this section, we examine the behavior of the AM-SMD method on the throughput maximization problem in a multi-user MIMO wireless network as described in Section \ref{sec:int}. First, we need to show that the Nash equiblrium of game \eqref{eq:Rgame} is a solution of VI$(\mathcal X,F)$. 
\nm{Since the throughput function $R_i(X_i,X_{-i})$ given by \eqref{eq:game} is a concave function, we can apply Lemma \ref{lemma:nash}.}
\nm{
We have $\nabla_{ X_i}R_i(  X_i,  X_{-i})=H_{ii}^\dagger W^{-1}  H_{ii}$ \cite{mertikopoulos2016learning}. By concavity of $R_i(  X_i,  X_{-i})$ in $X_i$ and convexity of $\mathcal X_i$, the sufficient equilibrium conditions in Lemma \ref{lemma:nash} are satisfied, therefore a Nash equiblrium of game \eqref{eq:Rgame} is a solution of VI \eqref{eq:VI}, where $\mathcal X\triangleq\prod_i \mathcal X_i$ and 
$F(X)\triangleq-\text{diag}\left(  H_{11}^\dagger  W^{-1}  H_{11},\cdots,  H_{NN}^\dagger   W^{-1}  H_{NN}\right)$. Convexity of $-R_i$ results in monotonicity of the mapping $F(X)$. Hence, from Lemma \ref{lemma:convergence}, we have the following corollary. 
}
\begin{col}
The sequence $\overline{X}_t$ generated by AM-SMD algorithm converges to the weak solution of VI$(\mathcal X, F)$.
\end{col}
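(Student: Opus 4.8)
The plan is to invoke the rate statement from Lemma~\ref{lemma:convergence} together with the characterization of weak solutions via the $G$ function established in Lemma~\ref{lm:gap properties2}. First I would observe that the corollary is an immediate consequence of the convergence rate $\EX[G(\overline{X}_T)] \leq 3C\sqrt{\log(n)/T}$ derived in the Theorem (or more directly, the bound \eqref{eq:bound-1} of Lemma~\ref{lemma:convergence} under any step-size choice with $\sum_t \eta_t \to \infty$ and $\sum_t \eta_t^2/\sum_t \eta_t \to 0$). Since the right-hand side tends to zero as $T \to \infty$, we conclude $\lim_{T\to\infty}\EX[G(\overline{X}_T)] = 0$. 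Because $G(\overline{X}_T)\geq 0$ for all $T$ by Lemma~\ref{lm:gap properties2}$(i)$, this expected-value convergence yields that $G(\overline{X}_T)\to 0$ in an appropriate probabilistic sense (in expectation, hence in probability; along a subsequence, almost surely).

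Next I would connect the vanishing of $G$ to the weak-solution property. By Lemma~\ref{lm:gap properties2}$(ii)$, a matrix $X^*_w$ is a weak solution precisely when $G(X^*_w)=0$. The sequence $\{\overline{X}_T\}$ lies in the compact convex set $\mathcal X=\{X\in\mathbb{S}^+_n:\tr{X}=1\}$, so it admits limit points in $\mathcal X$. The key step is a lower-semicontinuity / continuity argument: since $F$ is continuous on $\mathcal X$ (Assumption~\ref{ass:boundphi}(a)) and $\mathcal X$ is compact, the map $X\mapsto G(X)=\sup_{Z\in\mathcal X}\tr{F(Z)(X-Z)}$ is a supremum of continuous affine-in-$X$ functions, hence lower semicontinuous; in fact continuous here. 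Passing to a subsequence $\overline{X}_{T_k}\to \widehat X \in \mathcal X$ along which $G(\overline{X}_{T_k})\to 0$ almost surely, lower semicontinuity gives $G(\widehat X)\leq \liminf_k G(\overline{X}_{T_k}) = 0$, and combined with $G(\widehat X)\geq 0$ we obtain $G(\widehat X)=0$, so $\widehat X \in \mathcal X^\star_w$ by Lemma~\ref{lm:gap properties2}$(ii)$.

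I expect the main obstacle to be making the mode of convergence precise. The Theorem controls only $\EX[G(\overline{X}_T)]$, so the cleanest rigorous statement is convergence of the expected gap to zero, from which convergence in probability of $G(\overline{X}_T)$ to $0$ follows by Markov's inequality (using $G\geq 0$). Upgrading to almost-sure convergence of the iterates to a weak solution requires extracting an almost-surely convergent subsequence and invoking the continuity of $G$ together with compactness of $\mathcal X$; one should also note that if $F$ is continuous then by the Remark following Definition~\ref{def:stable} the weak solution set coincides with the strong solution set, so the limit point is in fact a genuine solution of VI$(\mathcal X,F)$. A secondary technical point is justifying that the supremum defining $G$ is attained and that $G$ is continuous on the compact set $\mathcal X$, which follows from continuity of $F$ and the Berge maximum theorem; this guarantees the limit-point argument is valid.
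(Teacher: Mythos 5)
Your proposal is correct and follows essentially the same route as the paper: the corollary is obtained by combining the bound on $\EX[G(\overline{X}_T)]$ from Lemma~\ref{lemma:convergence} (which vanishes as $T \to \infty$ under the chosen stepsizes) with the characterization of weak solutions via the gap function in Lemma~\ref{lm:gap properties2}. Your additional care about the mode of convergence (Markov's inequality, almost-surely convergent subsequences, compactness of $\mathcal X$, and lower semicontinuity of $G$) makes precise what the paper leaves implicit, since the paper asserts the corollary directly from Lemma~\ref{lemma:convergence} without further elaboration.
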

\subsection{Problem Parameters and Termination Criteria}
We consider a MIMO multicell cellular network composed of seven hexagonal cells (each with a radius of $1$ km) as Figure \ref{fig:cell}. We assume there is one MIMO link (user) in each cell which corresponds to the transmission from a transmitter (T) to a receiver (R). Following \cite{scutari2009mimo}, we generate the channel matrices with a Rayleigh distribution, i.e, each element is generated as circularly symmetric Guassian random variable with a variance equal to the inverse of the square distance between the transmitters and receivers. 
The network can be considered as a 7-users game where each user is a MIMO channel. Distance between receivers and transmitters are shown in Table \ref{table:distance}. 
It should be noted that the channel matrix between any pair of transmitter $i$ and receiver $j$ is a matrix with dimension of $n_j \times m_i$. In the experiments, we assume $n_j=n$ for all $j$ and $m_i=m$ for all $i$. As an example, the channel matrix between transmitter 4 and receiver 5, where $n=m=4$ is represented in Table \ref{table:channel}.  
Moreover, the transmitters have a maximum power of $1$ decibels of the measured power referenced to one milliwatt (dBm). 
\begin{figure}[H]
\begin{center}
  \includegraphics*[scale=0.35]{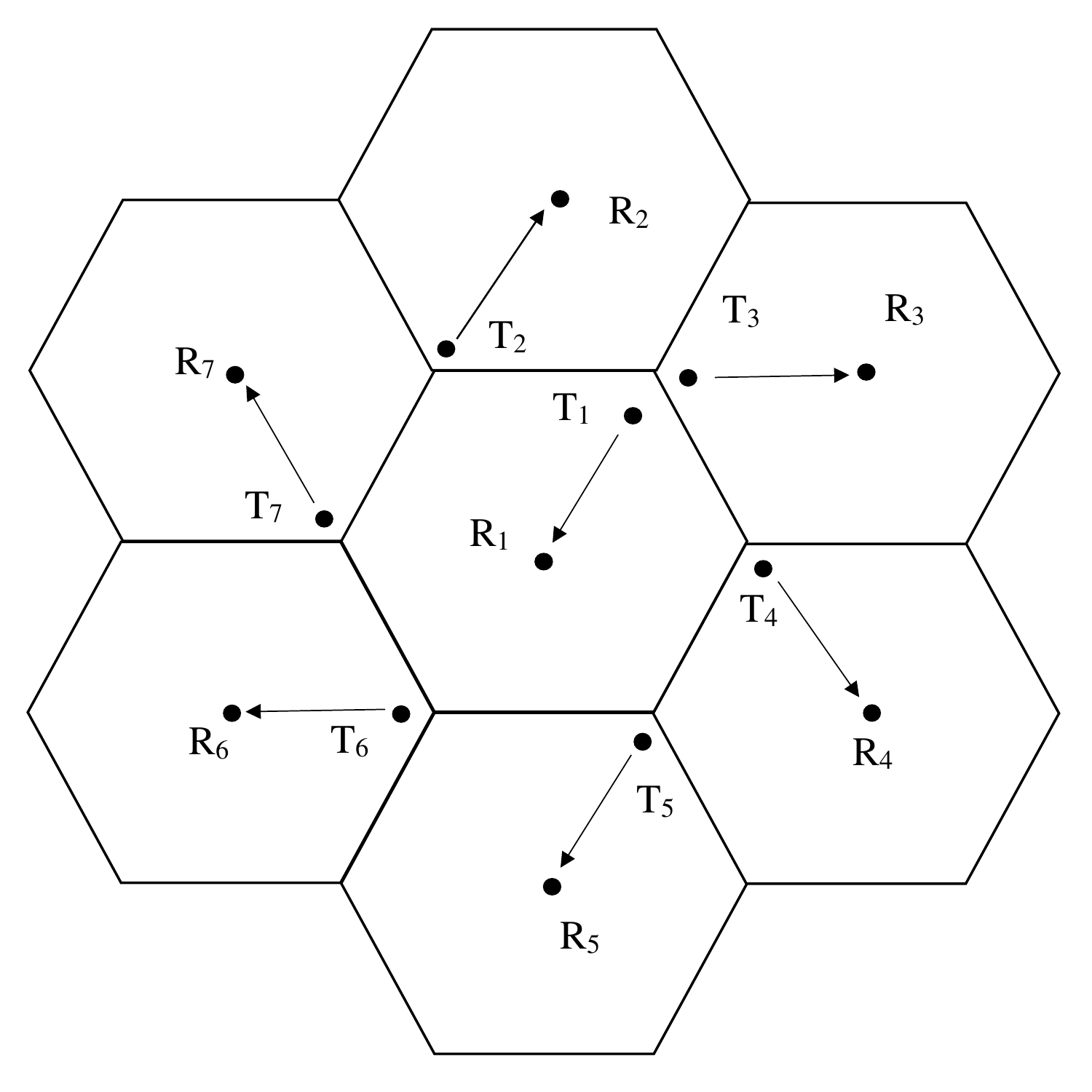}
	\caption{\scriptsize {Multicell cellular system}}
	\label{fig:cell}
	\end{center}
\end{figure}
\begin{table}[ht]
\caption{Distance matrix (in terms of kilometer)}
\label{table:distance}
\begin{center}
\begin{tabular}{|c|c c c c c c c|}
\hline
& R1 & R2 & R3 & R4 & R5 & R6 & R7  \\ \hline
T1 & 0.89 & 1.01 & 1.05 & 1.10 & 1.01 & 1.05 & 1.10  \\
T2 & 1.01 & 0.89 & 1.05 & 2.10 & 2.69 & 2.66 & 1.99	\\
T3 & 1.10 & 1.90 & 0.89 & 1.01 & 2.10 & 2.72 & 2.72	 \\ 
T4 & 1.99 & 2.61 & 1.94 & 0.89 & 1.10 & 2.10 & 2.76  \\
T5 & 2.56 & 2.69 & 2.66 & 1.99 & 0.89 & 1.05 & 2.10 \\
T6 & 2.52 & 2.10 & 2.72 & 2.72 & 1.90 & 0.89 & 1.01 \\
T7 & 1.90 & 1.10 & 2.10 & 2.76 & 2.61 & 1.94 & 0.89 \\
\hline
\end{tabular}
\end{center}
\end{table}
 
\begin{table}[ht]
\caption{Channel matrix between transmitter 4 and receiver 5 (in terms of decibels)}
\label{table:channel}
\begin{center}
\begin{tabular}{|c|c c c c|}
\hline
& RA1 & RA2 & RA3 & RA4  \\ \hline
TA1 & -0.54-0.71i & -1.39+2.24i & 0.65-2.17i & 0.84+0.17i  \\
TA2 & -0.13-0.71i & -0.14+0.88i & 0.09-1.67i & -1.22-0.25i	\\
TA3 & 1.39+2.34i  & -0.17+1.23i & 1.00+0.23i & 1.72-0.33i	 \\ 
TA4 & 2.40-0.97i  & 1.10-1.07i  & 2.94-2.00i & 0.21-1.64i  \\
\hline
\end{tabular}
\end{center}
\end{table}
We investigate the robustness of the AM-SMD algorithm under imperfect feedback. To simulate imperfections, we generate a zero-mean circularly symmetric complex Gaussian noise vector $  Z_t$ with covariance matrix $\sigma \mathbf I_{m}$, where $m=\sum_{i=1}^7 {m_i}$. 
In experiments, we consider the following gap function $Gap(  X)$ which is equal to zero for a strong solution.
\begin{defn} [A gap function] \label{def:gap2} Define the following function $Gap: \mathcal X \rightarrow \mathbb R$
\begin{equation}
\label{gap3}
	Gap(  {X})= \underset {  Z \in \mathcal X} {\sup}\tr{F(  X)(  {X}-  {Z})}, \quad \text{for all}~   {X} \in  \mathcal X.
\end{equation}
\end{defn}
Next, we provide some properties of the Gap function.
\begin{lemma}
\label{lm:gap properties}
The function $Gap(  {X})$ given by Definition \ref{def:gap2} is a well-defined gap function, in other words, $(i)$ $Gap(  {X})$ is nonnegative for all $  X \in \mathcal X$; and $(ii)$ $  X^*$ is a strong solution to problem \eqref{eq:VI} iff $Gap(  {X^*})=0$.
\end{lemma}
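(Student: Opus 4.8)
The plan is to mirror the argument already used for Lemma \ref{lm:gap properties2}, adapting it to the fact that here the supremum is taken with $F$ evaluated at the fixed point $X$ rather than at the running variable $Z$; this sign/placement difference is precisely what ties $Gap$ to \emph{strong} rather than weak solutions. Throughout I would use the strong-solution condition from \eqref{eq:VI2}, namely that $X^*$ solves VI($\mathcal X,F$) iff $X^*\in\mathcal X$ and $\tr{F(X^*)(X-X^*)}\geq 0$ for all $X\in\mathcal X$, equivalently $\tr{F(X^*)(X^*-X)}\leq 0$ for all $X\in\mathcal X$.

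First I would establish part $(i)$ by a feasibility argument. For an arbitrary $X\in\mathcal X$, the choice $Z=X$ is admissible in the supremum and yields $\tr{F(X)(X-X)}=0$. Hence $Gap(X)=\sup_{Z\in\mathcal X}\tr{F(X)(X-Z)}\geq 0$, proving nonnegativity. If desired, I would also remark that since $Z\mapsto\tr{F(X)(X-Z)}$ is affine in $Z$ and $\mathcal X$ is convex and compact, the supremum is finite and attained, so $Gap$ is a well-defined real-valued function.

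For part $(ii)$ I would argue both directions. In the forward direction, assume $X^*$ is a strong solution, so $\tr{F(X^*)(X^*-X)}\leq 0$ for all $X\in\mathcal X$; taking the supremum over $Z=X\in\mathcal X$ gives $Gap(X^*)\leq 0$, and combining with part $(i)$ forces $Gap(X^*)=0$. Conversely, assume $X^*\in\mathcal X$ with $Gap(X^*)=0$. Then $\tr{F(X^*)(X^*-Z)}\leq 0$ for every $Z\in\mathcal X$, i.e.\ $\tr{F(X^*)(Z-X^*)}\geq 0$ for all $Z\in\mathcal X$, which is exactly the strong-solution condition \eqref{eq:VI2}. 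This closes the equivalence.

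I do not anticipate a serious obstacle: this statement is the strong-solution analogue of Lemma \ref{lm:gap properties2}, differing only in evaluating the mapping at $X$ rather than at $Z$. The one point requiring mild care is keeping track of the sign flip between $X^*-Z$ and $Z-X^*$ so that the supremum condition matches the VI inequality in the correct orientation; once that is handled, everything follows directly from the definition of a strong solution and the feasibility choice $Z=X$.
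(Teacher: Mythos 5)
Your proposal is correct and follows essentially the same argument as the paper's own proof: part $(i)$ by taking $Z=X$ in the supremum, and part $(ii)$ by rewriting the strong-solution condition $\tr{F(X^*)(X^*-Z)}\leq 0$ and combining the resulting bound $Gap(X^*)\leq 0$ with nonnegativity, then reversing the same sign manipulation for the converse. The added observation that the supremum is attained because the objective is affine in $Z$ and $\mathcal X$ is compact is a harmless refinement the paper omits.
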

The proof is similar to the proof of Lemma \ref{lm:gap properties2}.

The algorithms are run for a fixed number of iterations $T$. We plot the gap function for different number of transmitter and receiver antennas ($m,n$). We also plot the gap function for different values of $\sigma$ including $0.5, 1, 5$. We use MATLAB to run the algorithms and CVX software to solve the optimization problem \eqref{gap3}. 
\subsection{Matrix Exponential Learning} 
Mertikopoulos et al. \cite{mertikopoulos2017distributed} proved the convergence of MEL algorithm under strong monotonicity of mapping $F$ assumption while, in practice, this assumption might not hold for the games and VIs. \nm{We established the convergence of the AM-SMD and derived a rate statement without assuming strong monotonicity. Here, we compare the performance of the AM-SMD method with that of MEL under regularization. }
Doing so, we obtain a strongly monotone mapping (\cite{facchinei2007finite}, Chapter 2).
Let $\Vert A\Vert_F=\sqrt{\tr{A^TA}}$ denote the Frobenius norm of a matrix $A$. 
Note that the function $h(  A)=\frac{1}{2}\Vert   A\Vert_F^2$ is strongly convex with parameter 1 and $\nabla \frac{\lambda}{2} \Vert   X \Vert_F^2=\lambda   X$. Therefore, to regularize the mapping $F$, we need to add the term $\lambda  X$ to it and consequently, the mapping $F'=F+\lambda  X$ is different from the original $F$. Note that for small values of $\lambda$, MEL converges very slowly. On the other hand, the solution which is obtained by using large values of $\lambda$ is far from the solution to the original problem. Hence, we need to find a reasonable value of $\lambda$. For this reason, we tried three different values for $\lambda$ including $0.1, 0.5, 1$.  
\begin{table}[h]
\setlength{\tabcolsep}{3pt}
\centering
 \begin{tabular}{c| c  c  c}
& $\sigma=0.5$ & $\sigma=1$ & $\sigma=5$ \\ \hline\\
\begin{turn}{90}
$(2,4)$
\end{turn}
&
\begin{minipage}{.14\textwidth}
\includegraphics[scale=.17, angle=0]{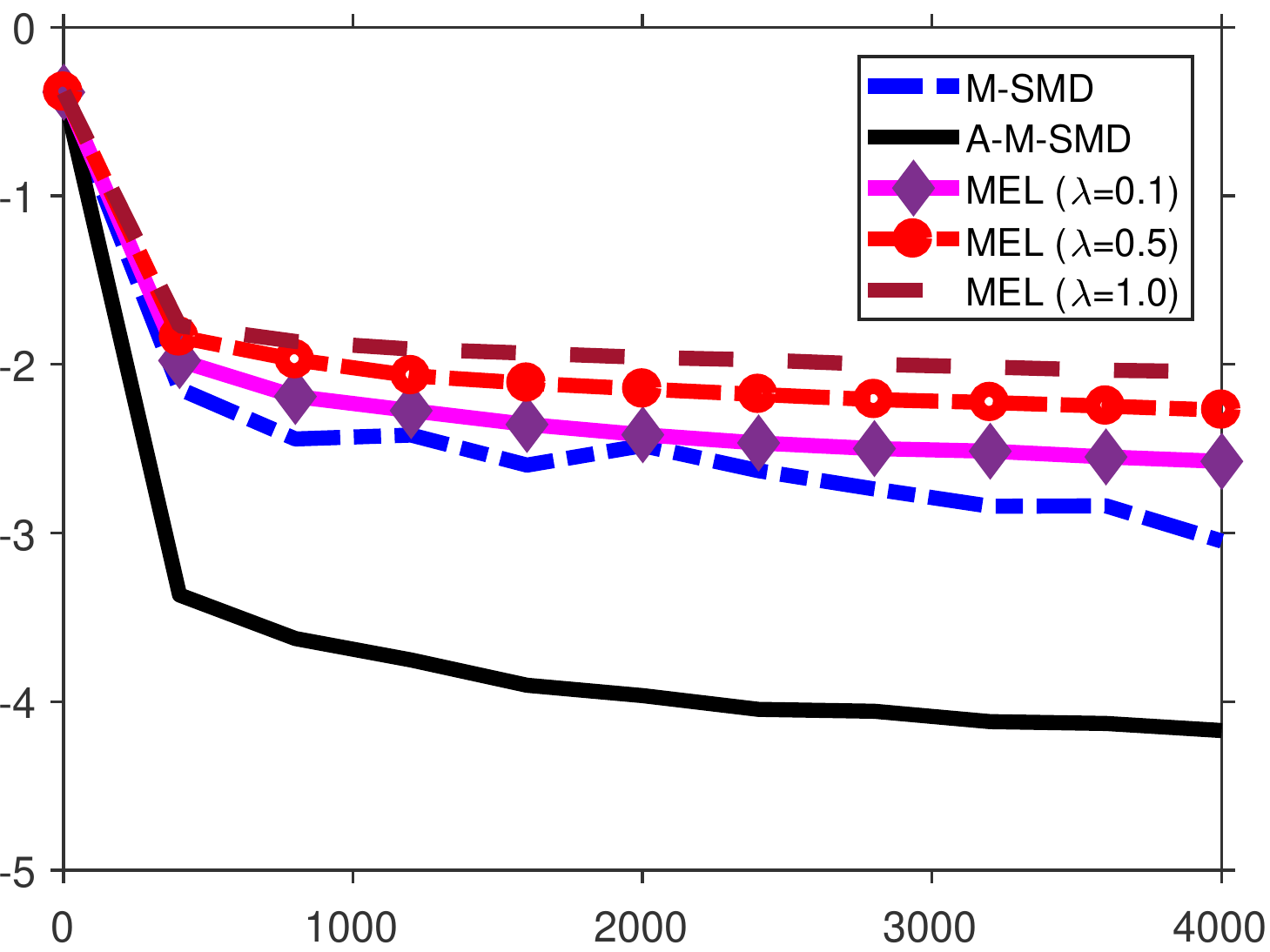}
\end{minipage}
&
\begin{minipage}{.14\textwidth}
\includegraphics[scale=.17, angle=0]{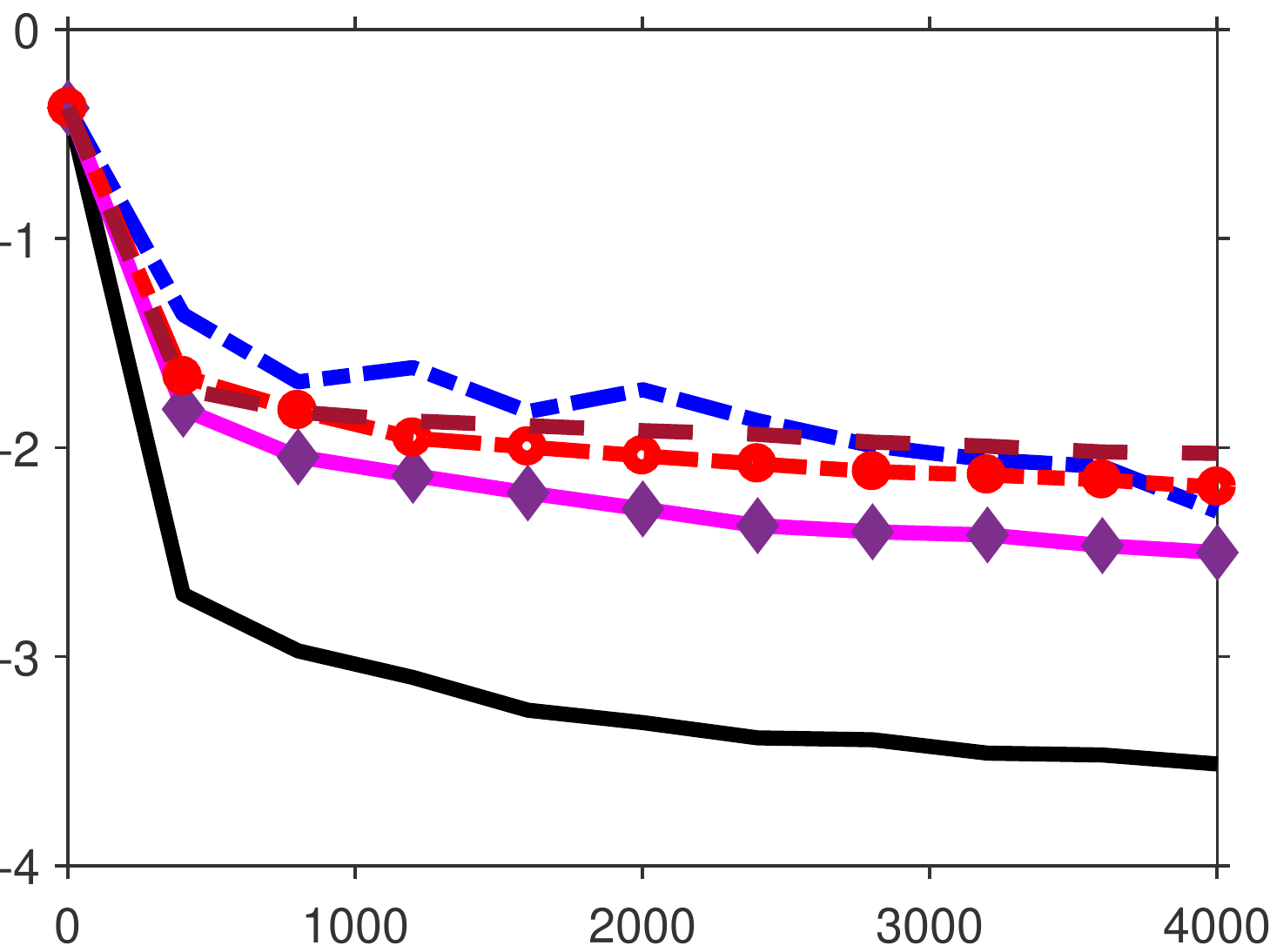}
\end{minipage}
	&
\begin{minipage}{.14\textwidth}
\includegraphics[scale=.17, angle=0]{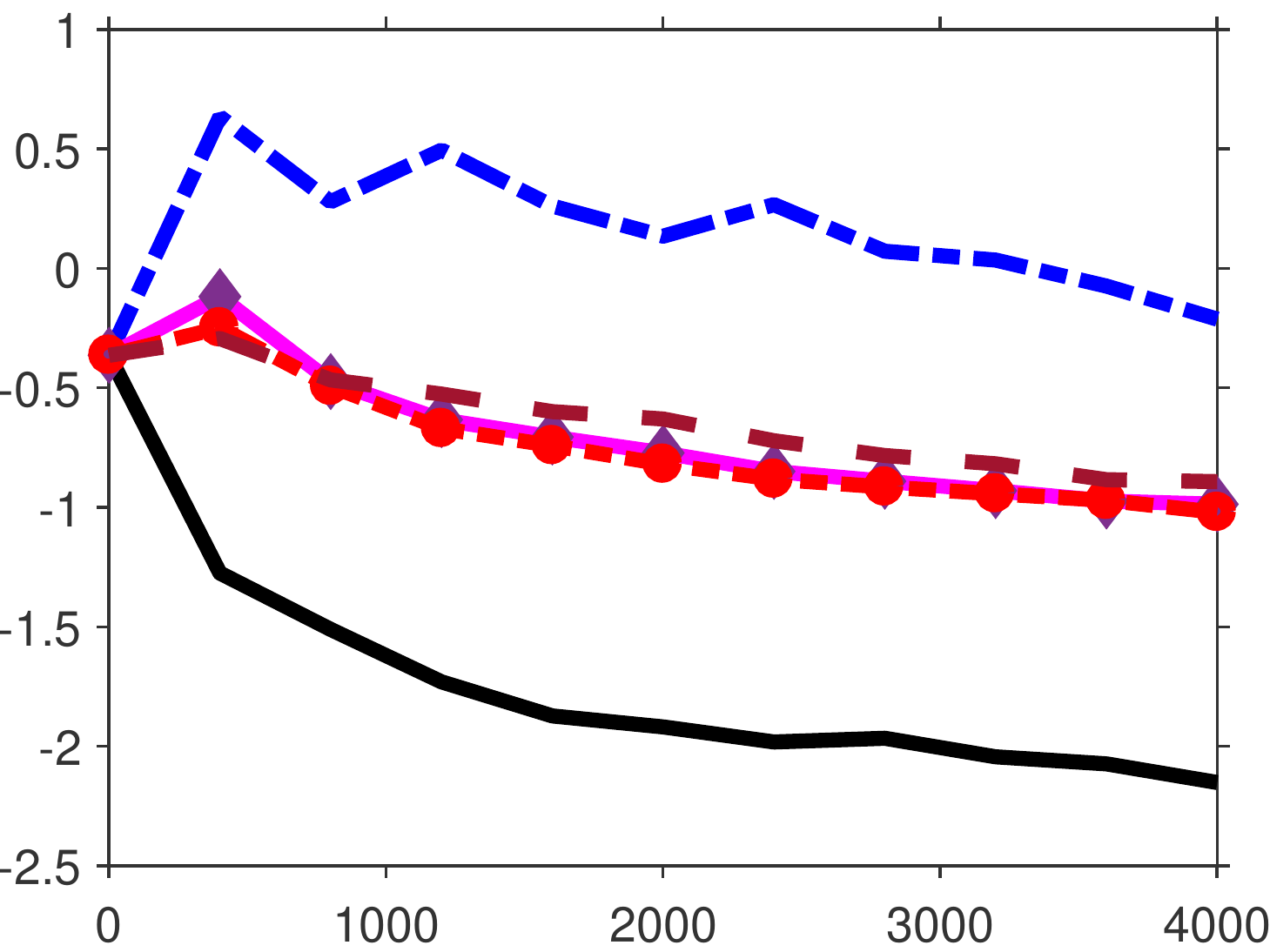}
\end{minipage}
\\
\begin{turn}{90}
$(4,2)$
\end{turn}
&
\begin{minipage}{.14\textwidth}
\includegraphics[scale=.17, angle=0]{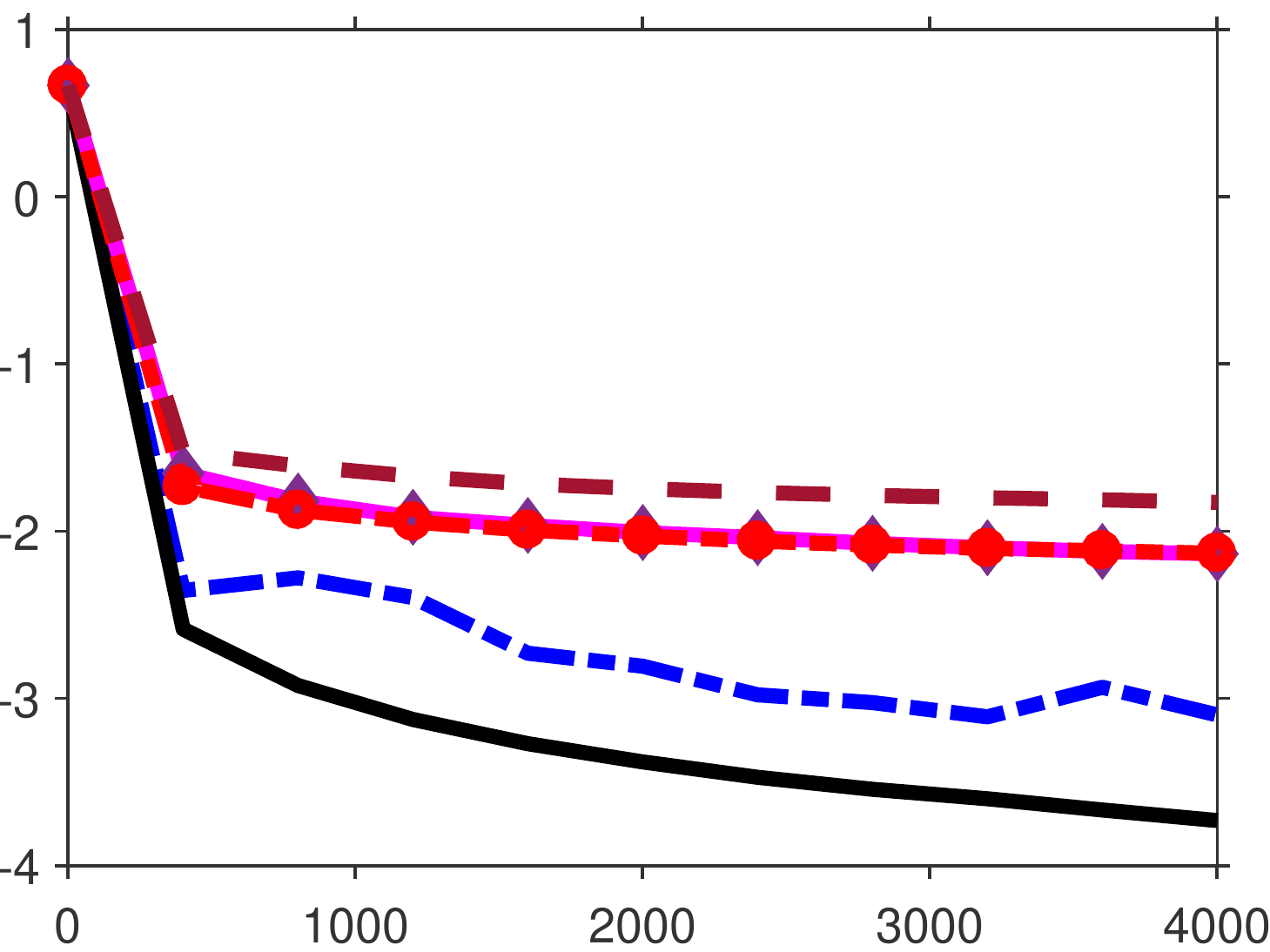}
\end{minipage}
&
\begin{minipage}{.14\textwidth}
\includegraphics[scale=.17, angle=0]{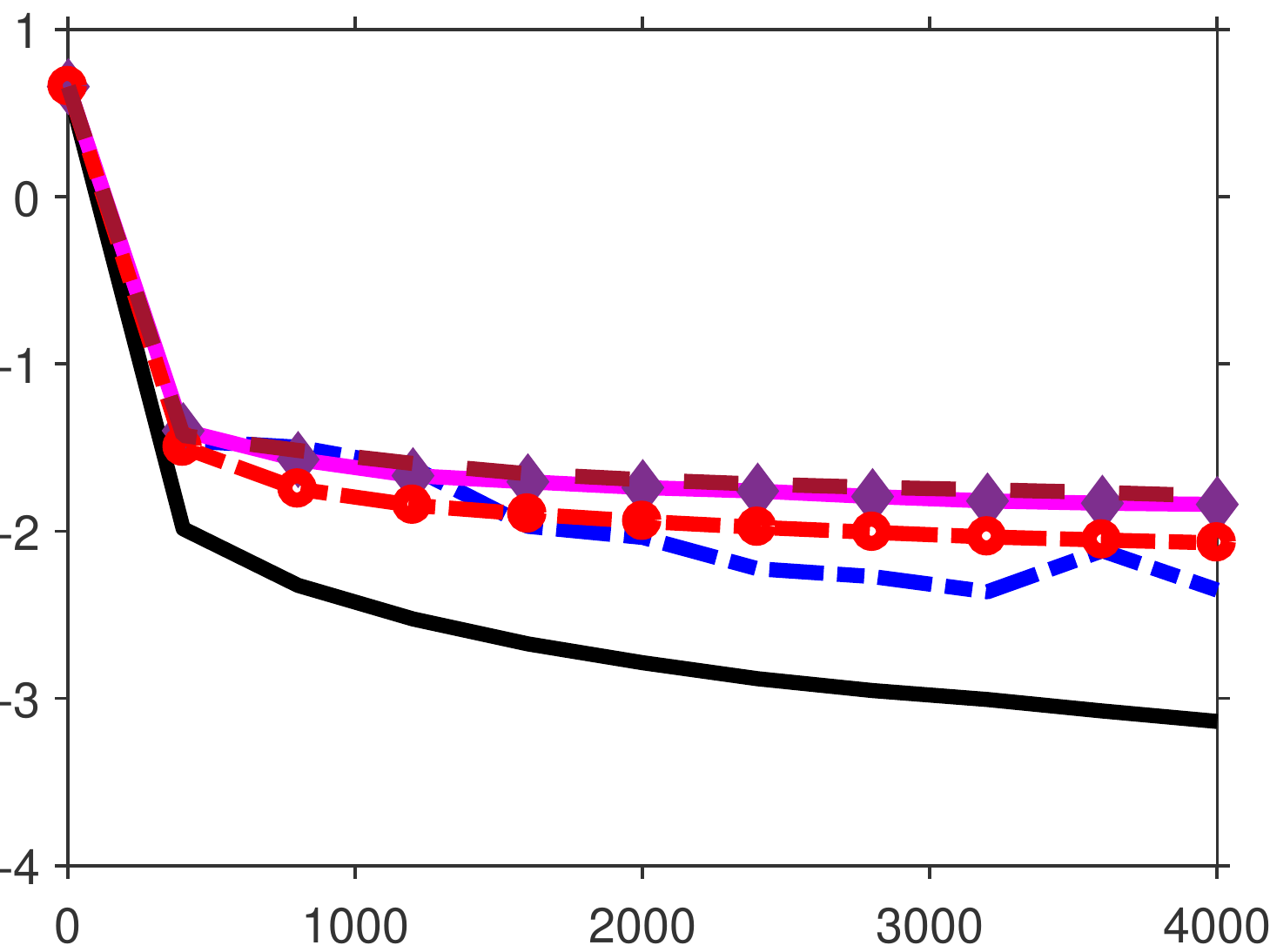}
\end{minipage}
&
\begin{minipage}{.14\textwidth}
\includegraphics[scale=.17, angle=0]{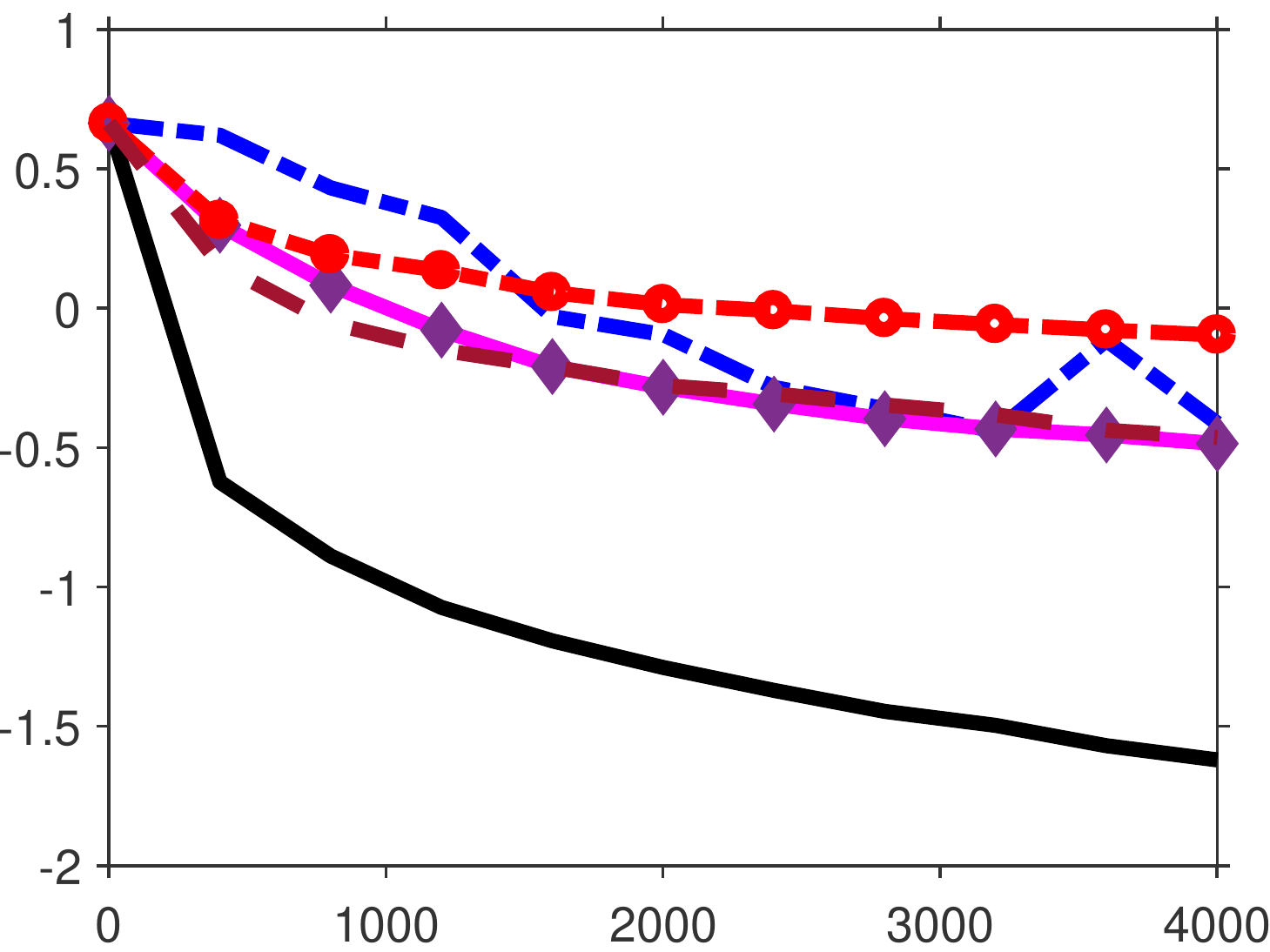}
\end{minipage}
\\
\begin{turn}{90}
$(4,4)$
\end{turn}
&
\begin{minipage}{.14\textwidth}
\includegraphics[scale=.17, angle=0]{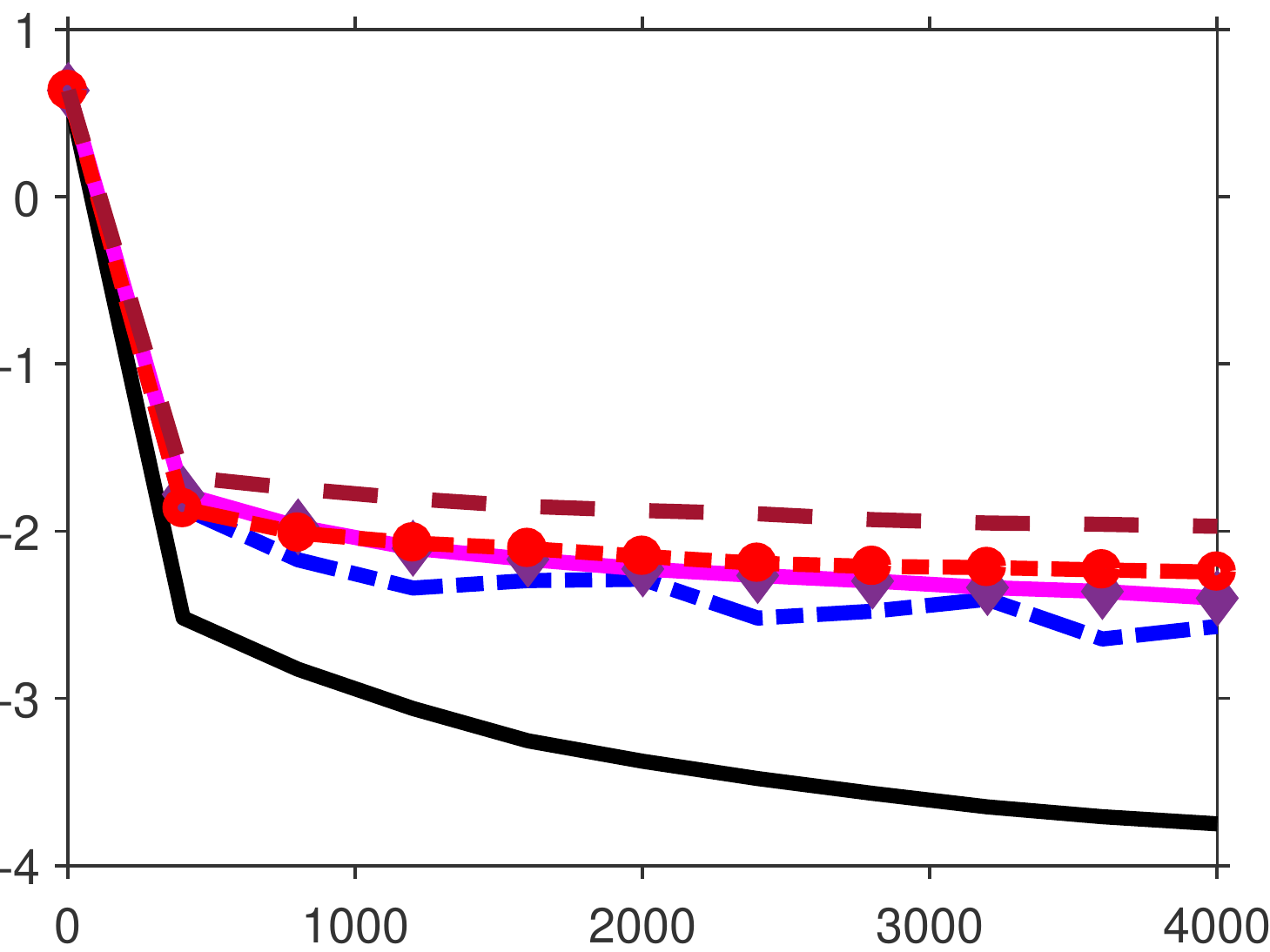}
\end{minipage}
&
\begin{minipage}{.14\textwidth}
\includegraphics[scale=.17, angle=0]{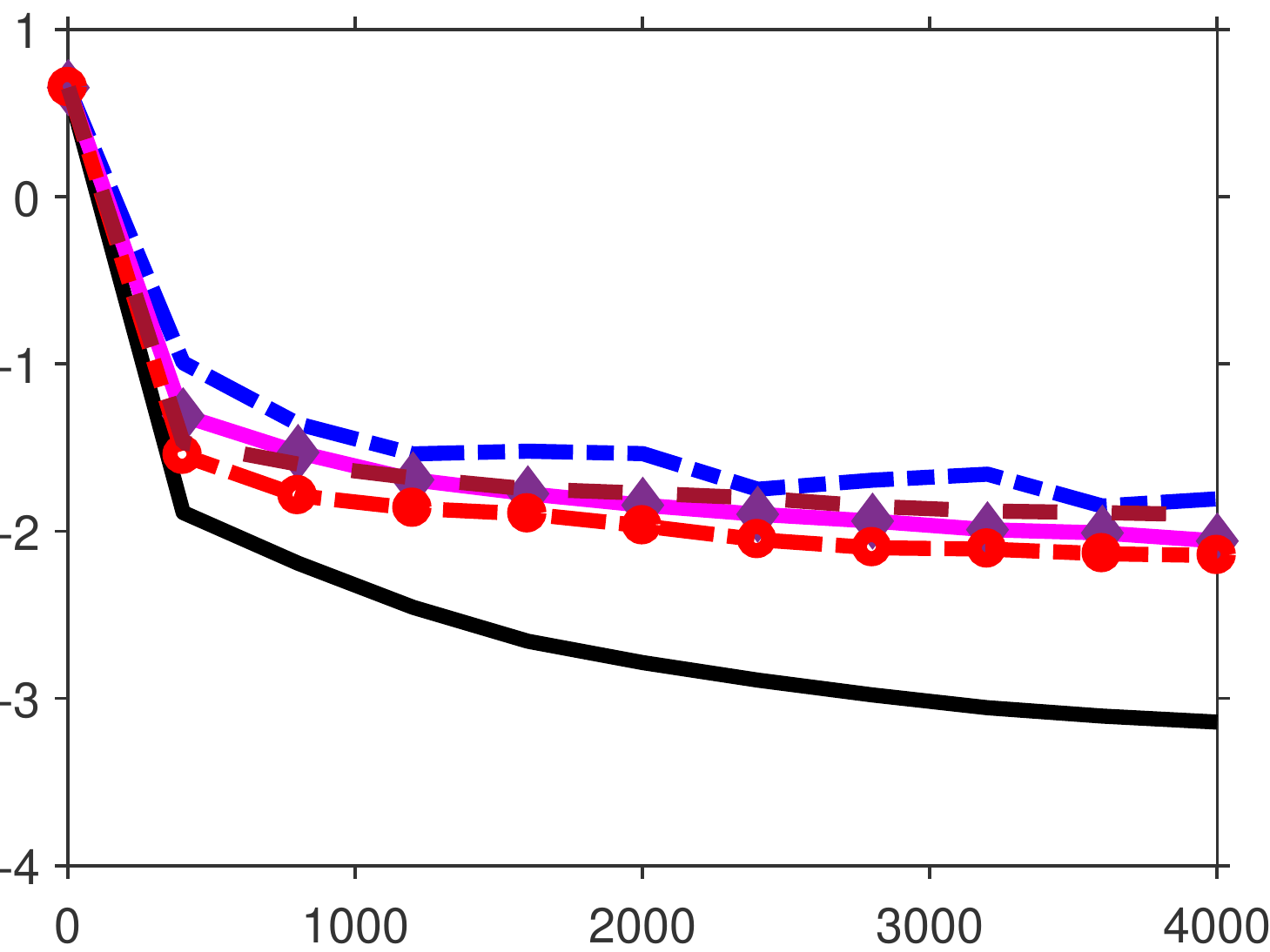}
\end{minipage}
&
\begin{minipage}{.14\textwidth}
\includegraphics[scale=.17, angle=0]{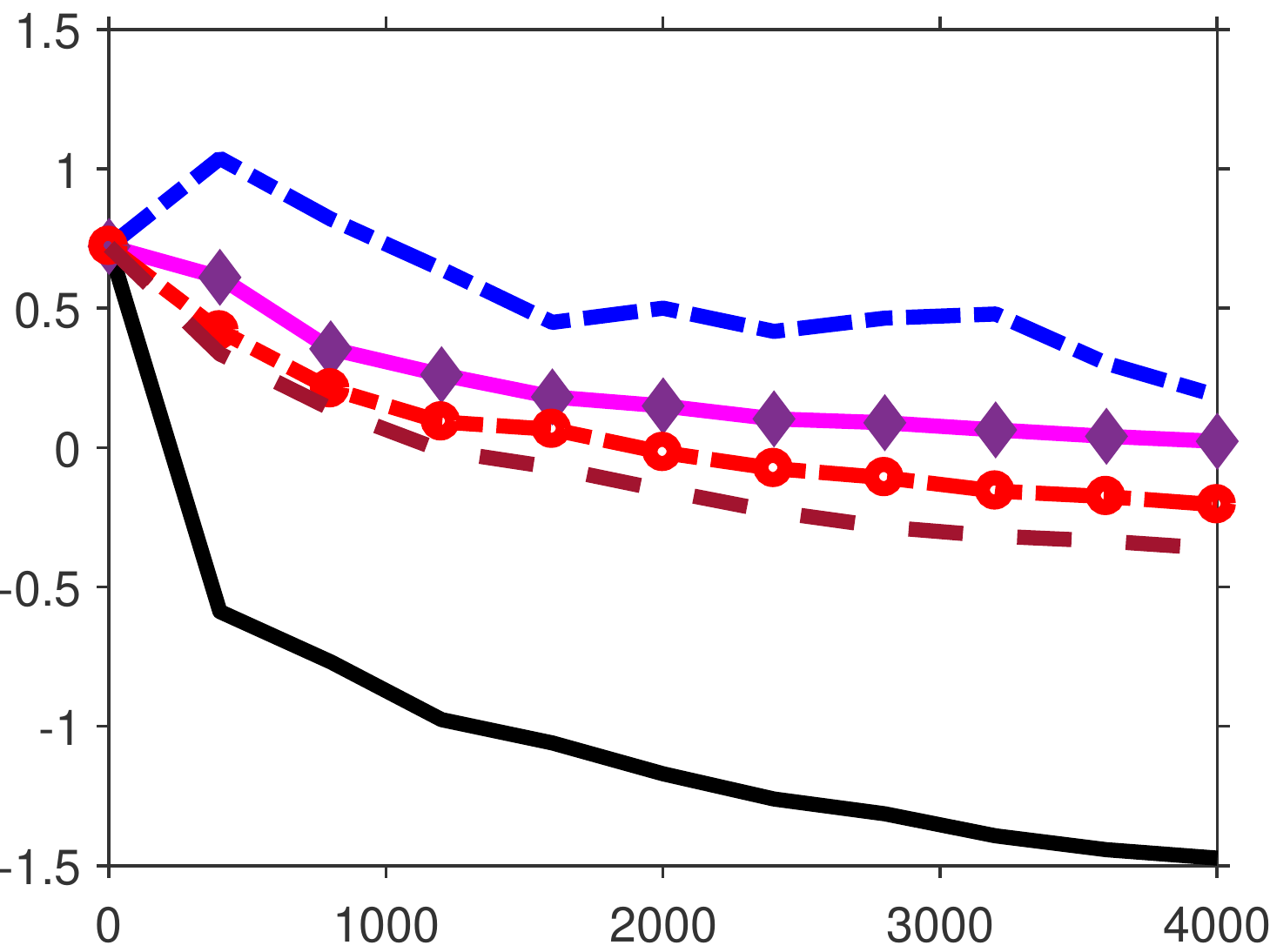}
\end{minipage}
\end{tabular}
\captionof{figure}{\scriptsize{Comparison of M-SMD, AM-SMD and MEL w.r.t. problem size ($m,n$), uncertainty ($\sigma$), and regularization parameter ($\lambda$) for 4000 iterations}}
\label{fig:fiveplots}
\end{table}

For each experiment, the algorithm is run for $4000$ iterations. We apply the well-known harmonic stepsize $\eta_t={1}/{\sqrt{t}}$ for AM-SMD and M-SMD, and harmonic stepsize $\eta_t={1}/{t}$ for MEL. 
Figure \ref{fig:fiveplots} demonstrates the performance of AM-SMD, M-SMD and MEL algorithms in terms of logarithm of expected value of gap function \eqref{gap3}. The expectation is taken over $  Z_t$, we repeat the algorithm for $10$ sample paths and obtain the average of the gap function. In these plots, the blue (dash-dot) and black (solid) curves correspond to the M-SMD and AM-SMD algorithms, respectively, the magenta (solid diamond), red (circle dashed) and brown (dashed) curves display MEL algorithm with $\lambda=0.1, 0.5$ and $1$.
As can be seen in Figure \ref{fig:fiveplots}, AM-SMD algorithm outperforms the M-SMD and MEL algorithms in all experiments. It is evident that MEL algorithm converges slowly but faster than M-SMD. Comparing three versions of MEL algorithm which apply large, moderate or small value of regularization parameter $\lambda$, it can be seen that MEL is not robust w.r.t this parameter since each one of MEL algorithms has a better performance than the other two in some cases.  
\begin{table}[h]
\setlength{\tabcolsep}{3pt}
\centering
 \begin{tabular}{c  c  c}
\begin{minipage}{.15\textwidth}
\includegraphics[scale=.17, angle=0]{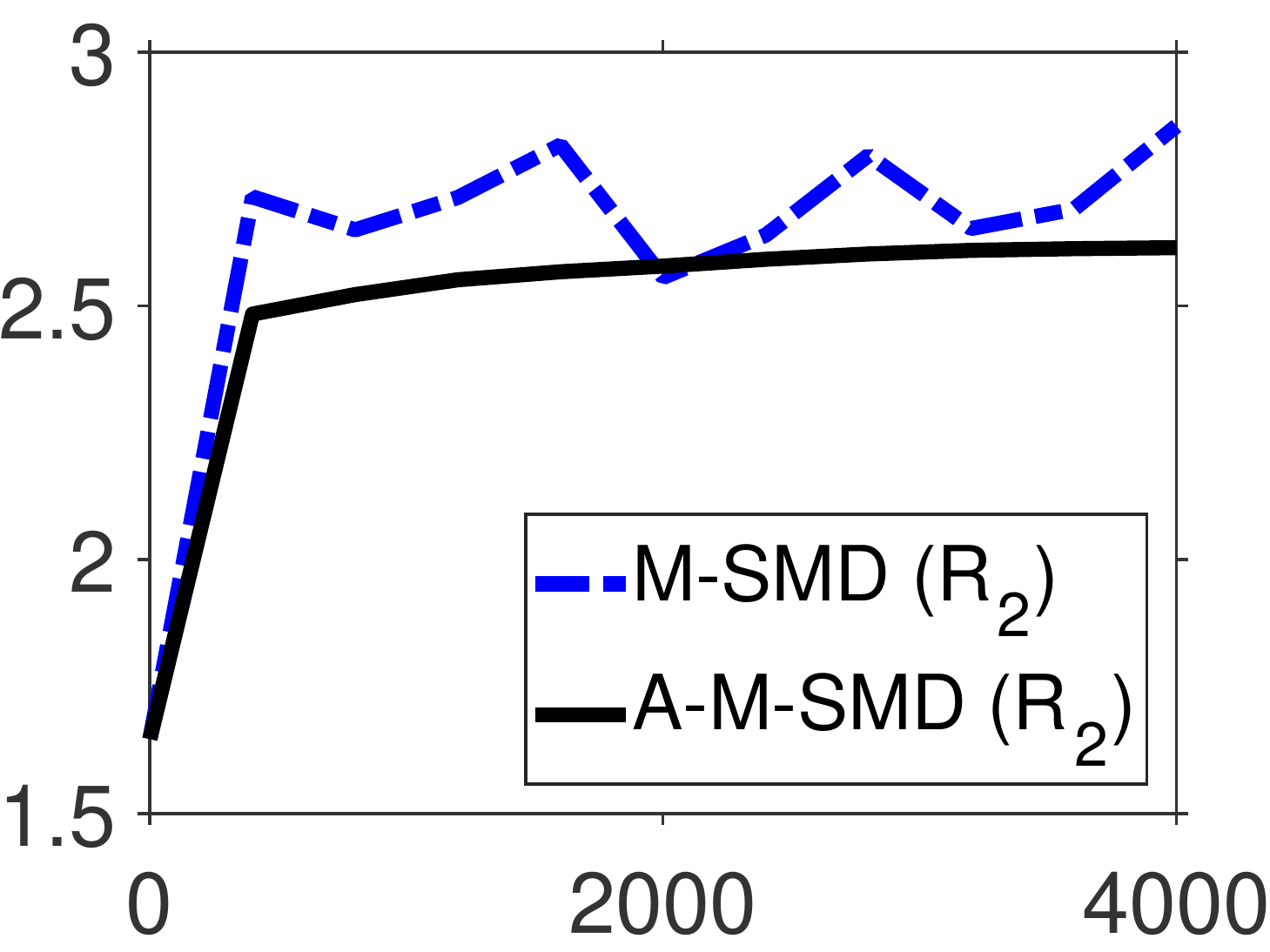}
\end{minipage}
&
\begin{minipage}{.15\textwidth}
\includegraphics[scale=.17, angle=0]{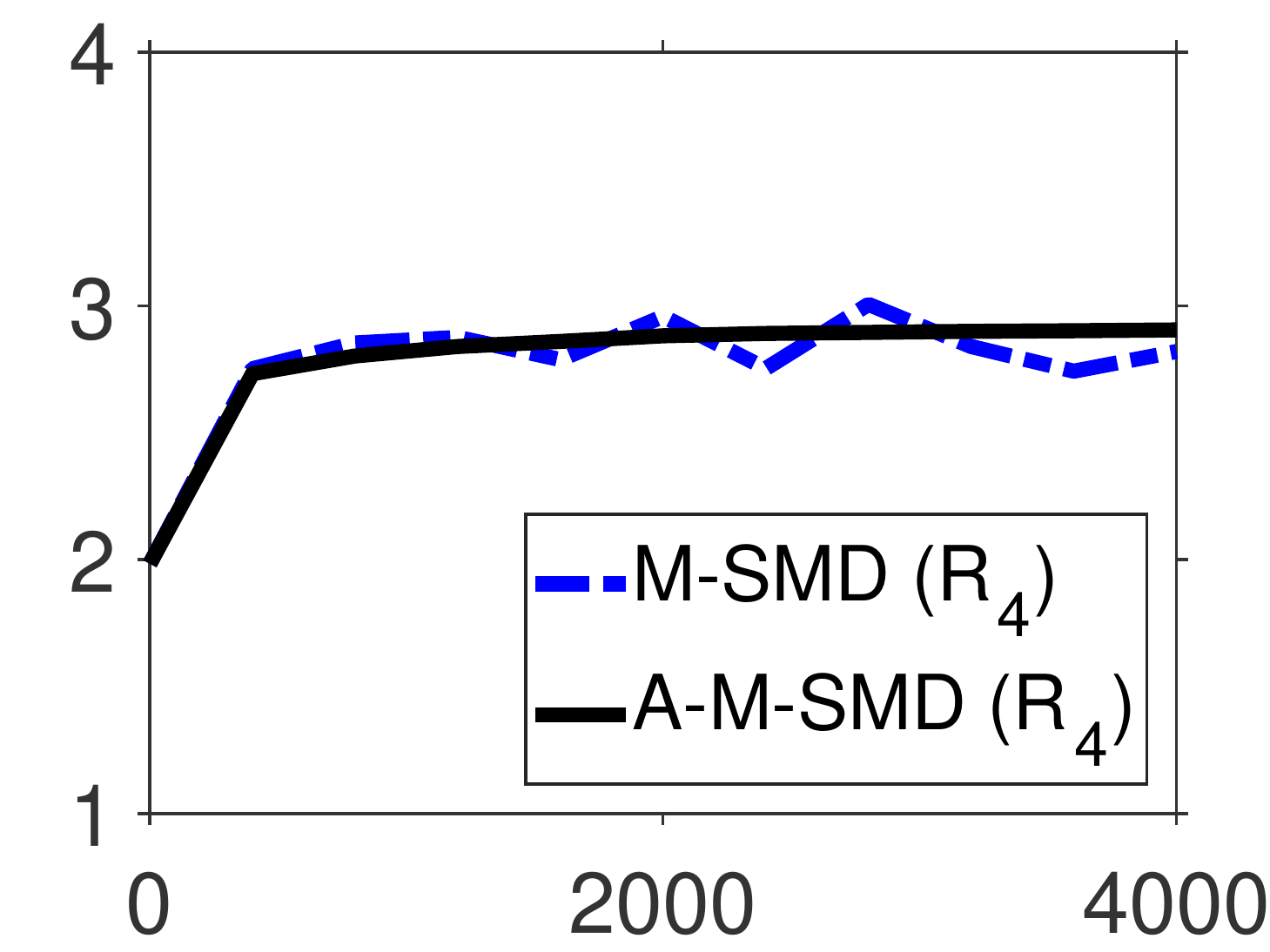}
\end{minipage}
&
\begin{minipage}{.15\textwidth}
\includegraphics[scale=.17, angle=0]{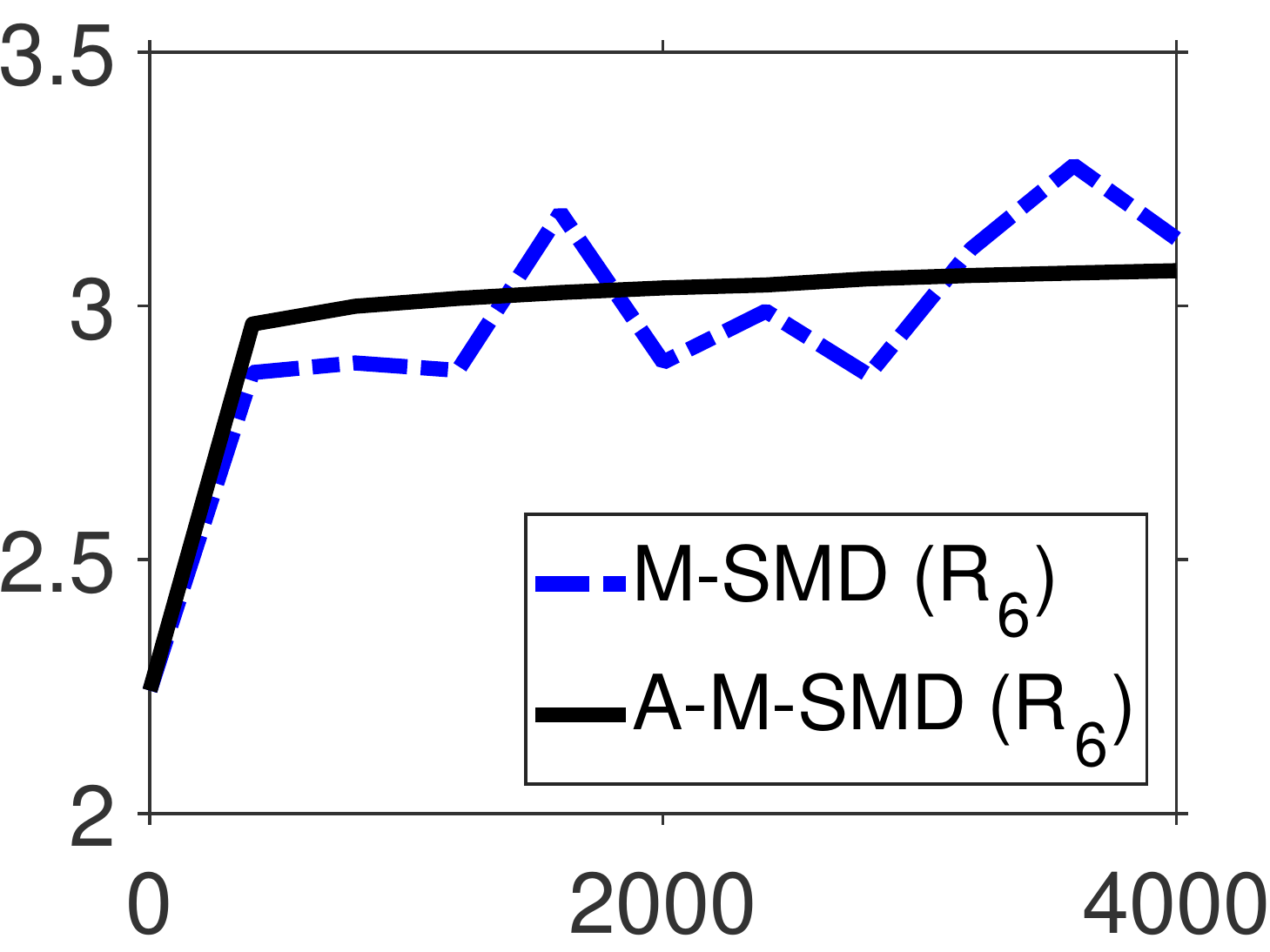}
\end{minipage}
\end{tabular}
\captionof{figure}{\scriptsize {Comparison of stability of M-SMD and AM-SMD in terms of users' objective function $R_i$ for $i=2,4,6$}}
\label{fig:functionR}
\end{table}

To compare the stability of two methods, we also plot the expected objective function value $R_i$ against the iteration number in Figure \ref{fig:functionR}. Here, we choose $n=m=4$ and $\sigma=10$. The algorithm is repeated for $10$ sample paths and the average of objective function is obtained. Each plot represents the performance of both algorithms for one specific player $i$. As an example, the first plot compares the stability of AM-SMD (black solid curve) and M-SMD (blue dash-dot curve) for the user $2$. It can be seen that for all players, the AM-SMD algorithm converges to a strong solution relatively faster while the M-SMD does not converge and oscillates significantly. 
\section{Conclusion}
\label{sec:conclusion}
We consider stochastic variational inequalities on semidefinite matrix spaces, where the mapping is merely monotone. \nm{ We develop a single-loop first-order method called averaging matrix stochastic mirror descent method and prove convergence to a weak solution of the SVI with rate of ${\cal O}(1/\sqrt{T})$}. Our numerical experiments performed on a wireless communication network display that the AM-SMD method is significantly robust w.r.t. the problem size and uncertainty. 

\section{Appendix}
Proof of Lemma \ref{pre:smoothstrong}:
Using the Fenchel coupling definition,
\begin{align}
\label{eq:pre2-2}
	H(  {X},  {Y}+  {Z}) = \omega(  {X})+\omega^*(  {Y}+  Z)-\tr{  {X}(  {Y}+  Z)}.
\end{align}
By strong convexity of $\omega$ w.r.t. trace norm (Lemma \ref{lm:strong}) and using duality between strong convexity and strong smoothness \cite{kakade2009duality}, 
$\omega^*$ is 1-strongly smooth w.r.t. the spectral norm, i.e., $\omega^*(  {Y}+  Z) \leq \omega^*(  {Y})+\tr{  Z \nabla \omega^*(  {Y})}+\Vert   Z\Vert^2_2.$
By plugging this inequality into \eqref{eq:pre2-2} we have
	\begin{align*}
&H(  {X},  {Y}+  {Z}) \leq \omega(  {X}) + \omega^*(  {Y})+ \tr{  Z \nabla \omega^*(  {Y})}\\
&+\Vert   Z\Vert^2_2-\tr{  {X}{Y}}-\tr{  {X}{Z}}\\&=H(  {X},  {Y})+\tr{  Z(\nabla \omega^*(  {Y})-  {X})}+\Vert   Z\Vert^2_2,
	\end{align*}
where in the last relation, we used \eqref{eq:fenchel}. 
\bibliographystyle{IEEEtran}
\bibliography{referenceIEEE} 
\end{document}